\documentclass[12pt]{siamart251216}
\usepackage[english]{babel}
\usepackage{blindtext}

\usepackage[utf8]{inputenc}
\usepackage{amsmath,amssymb,amsfonts}
\usepackage{bm}
\usepackage{enumitem}
\usepackage{algorithm}
\usepackage{algpseudocodex}
\usepackage{float}
\usepackage{blkarray}
\usepackage{tikz-cd}
\usepackage{tcolorbox}
\newtcolorbox{mybox}{colback=red!5!white,colframe=red!75!black, sharp corners = all}
\usepackage{bclogo} 
\def\ucsign{\bcpanchant}

\setenumerate{labelindent=0.25\parindent,leftmargin=*}

\DeclareMathOperator{\pf}{pf}

\DeclareMathOperator{\diag}{diag}

\def\F{{\mathsf R}}

\def\M{{\mathcal M}}
\def\N{{\mathbb N}}

\def\D{{\mathbf D}}

\def\SM{\raisebox{.0pt}{-}}
\let\det\undefined
\DeclareMathOperator{\det}{det}

\DeclareMathOperator{\sgn}{sgn}

\DeclareMathOperator{\SGN}{\mathsf S}

\newenvironment{psmallmatrix}
  {\left(\begin{smallmatrix}}
  {\end{smallmatrix}\right)}

\theoremstyle{plain}

\theoremstyle{definition}
\newtheorem{example}[theorem]{Example}

\newtheorem{remark}[theorem]{Remark}

\numberwithin{equation}{section}
\numberwithin{algorithm}{section}
\numberwithin{table}{section}

\headers{On the computations of the Cullis' determinant}{A. Guterman and A. Yurkov}

\title{On the computations of the Cullis' determinant}

\author{Alexander Guterman\thanks{Department of Mathematics, Bar Ilan University, Ramat-Gan, 5290002, Israel.}
\and Andrey Yurkov\thanks{\emph{Correspnding author}, Department of Mathematics, Bar Ilan University, Ramat-Gan, 5290002, Israel
  (\email{andrey.yurkov@biu.ac.il}).}}

\ifpdf
\hypersetup{
  pdftitle={On the computations of the Cullis' determinant},
  pdfauthor={A. Guterman and A. Yurkov}
}
\fi

\begin{document}

\maketitle

\begin{abstract}
The Cullis' determinant is a generalization of the ordinary determinant for rectangular matrices. It is defined as the alternating sum of maximal minors of a given matrix. In this paper we express the Cullis' determinant of a matrix $X$  as the Pfaffian of the matrix obtained from $X$ by matrix multiplication and transposition. 

Relying on this result, we present an efficient polynomial-time division-free algorithm for calculating the Cullis' determinant of a given matrix with entries belonging to the commutative ring. We provide an asymptotical analysis of its arithmetical complexity in comparison to the definition-based algorithm. 

In addition, we derive formulas for horizontal expansion of the Cullis' determinant which complements the existing formula for Laplace expansion along the columns of a matrix.
\end{abstract}

\begin{keywords}
Cullis' determinant, Pfaffian, rectangular matrices, division-free algorithms
\end{keywords}

\begin{MSCcodes}
15A15, 15A69, 15A24, 65F99, 68W30
\end{MSCcodes}
%
%



\section{Introduction}
\label{sec:intro}

The Cullis' determinant is a generalization of the determinant of square matrices for rectangular matrices. It was introduced by Cullis in 1913 in his monograph ``Matrices and Determinoids'' (see~\cite{cullis1913}) as follows:

\begin{definition}[{Cf.~\cite[\S3, p. 12 and \S30, Theorem~I.]{cullis1913}}]\label{def:CullisDet}
Let  $n \ge k\ge 1$ be integers, $X = (x_{i\,j})$ be an $n\times k$ matrix with entries belonging to the commutative ring $\F$. Then the Cullis' determinant $\det_{n\, k}(X)$ of $X$ is defined by the formula
\begin{equation*}
\det_{n\,k}(X) = (-1)^{(1 + \ldots + k)}\sum_{1 \le c_1 < \ldots < c_k \le n } (-1)^{c_1 + \ldots + c_k} \begin{vmatrix}x_{c_1\, 1} & \cdots & x_{c_1\, k}\\ \vdots & \ddots & \vdots\\ x_{c_k\, 1} & \cdots & x_{c_k\, k}\end{vmatrix}.
\end{equation*}
That is, the Cullis' determinant of $X$ is an alternating sum of basic minors of $X$. We also use the following notation for $\det_{n\, k} (X)$:
\[
\det_{n\,k}(X)=\begin{vmatrix}x_{1\,1} & \cdots & x_{1\,k}\\
\vdots & \cdots & \vdots\\
x_{n\,1} & \cdots & x_{n\,k}
\end{vmatrix}_{n\,k}.
\]
When $n = k$, $\det_{n\,n}$ is also denoted as $\det_{n}$ and is clearly equal to the ordinary determinant of a square matrix. In this case, we also use the notion $\det$ (omitting subscripts).

For $k > n$ the Cullis' determinant of an $n\times k$ matrix $X$ is defined by $\det_{n\, k}(X) = \det_{k\,n}(X^t)$ (see~\cite[Definition~1]{radic1966}). 
\end{definition}

The Cullis' determinant has many properties similar to those of the ordinary determinant (see \cite[\textsection 5, \textsection 27, \textsection 32]{cullis1913}, \cite{NAKAGAMI2007422} and~\cite{radic1966} for detailed proofs).
\emph{\begin{enumerate}[label=\textsc{C\arabic*.}, ref=\textsc{C\arabic*}]
\item For an $n\times n$ matrix $X$, $\det_{n\,n}(X) = \det (X).$
\item $\det_{n\,k}(X)$ is a linear function of columns of $X$.
\item If a matrix $X$ has two identical columns or one of its columns is a linear combination of other columns, then $\det_{n\,k}(X)$ is equal to zero.
\item Interchanging any two columns of $X$ changes the sign of $\det_{n\,k}(X)$.
\item Adding a linear combination of the columns of $X$ to another column of $X$ does not change $\det_{n\,k}(X)$.
\item\label{CD:Laplace} $\det_{n\,k}(X)$ can be calculated using the Laplace expansion along a column of $X$. That is, if  $1 < k \le n$, then for any $n \times k$ matrix $X = (x_{i\, j})$ the expansion of $\det_{n\, k}(X)$ along the $j$-th column is given by
\[
\det_{n\, k}(X) = \sum_{i = 1}^n (-1)^{i+j} x_{i\, j} \det_{(n-1)\,(k-1)}\Bigl(X(i|j)\Bigr),
\]
where $X(i|j)$ denotes the $(n-1) \times (k-1)$ matrix obtained from $X$ by deleting the $i$-th row and the $j$-th column.
\end{enumerate}}

After the work of Cullis, $\det_{n\,k}$ was studied by Radi\'{c} in~\cite{radic1966,radic2005,radic1991,radic2008},\linebreak Makarewicz, Mozgawa, Pikuta and Sza\l{}kowski in~\cite{makarewicz2014,makarewicz2016,makarewicz2020}, Amiri, Fathy and Bayat in~\cite{amiri2010}, Nakagami and Yanai in~\cite{NAKAGAMI2007422} and by the authors in~\cite{Guterman2024,Guterman2025,Guterman2025b,Guterman2025c,Guterman2025l}.  

While there exist polynomial-time algorithms for calculating the determinant of a square matrix such as Gaussian elimination, every algorithm developed so far for finding the value of the Cullis determinant of a given matrix requires exponential time. In this paper we introduce a formula which allows us to develop a polynomial-time algorithm for computing the Cullis' determinant (see the equality~\eqref{thm:DetEqPfAll:eqq1}). This formula involves the notation provided in the following several definitions. 

\begin{definition}\label{def:NAKAG1}
By $[n]$ we denote the set $\{1, \ldots, n\}$.
\end{definition}

\begin{definition}Let $n \ge 1$ be an integer. By $S_n \subseteq [n]^n$ we denote the set of all permutations of $[n]$, i.e., the set of all sequences $(i_1, \ldots, i_n)$ such that $i_k \neq i_l$ for all $1 \le k, l \le n$. If $\pi = (i_1, \ldots, i_n) \in S_{n}$ and $1 \le k \le n$, then $\pi(k)$ denotes $i_k$.
\end{definition}

\begin{definition}\label{def:SignOfSec}Assume that $n \ge 2$. Given a tuple  $(c_1, \ldots, c_n)$ of natural numbers, the sign of  $(c_1, \ldots, c_n)$ is denoted by $\sgn (c_1, \ldots, c_n)$ and is defined by
\[
\sgn (c_1, \ldots, c_n) =
\begin{cases}
0, & \exists i, j\in [n]\colon\, i \neq j\;\mbox{and}\; c_i = c_j\\
(-1)^{|\{(i, j) | 1 \le i < j \le n, c_i > c_j\}|}, & \mbox{otherwise}
\end{cases}.
\]
If $\pi \in S_n$ is a permutation, then $\sgn (\pi(1), \ldots, \pi(n))$ is also denoted by $\sgn(\pi).$
\end{definition}

\begin{definition}By $\Pi_{2m} \subseteq S_{2m}$ we denote the set of all permutations $\pi$ such that\linebreak  $\pi(1) < \pi(3) \ldots < \pi(2m-1)$ and  $\pi(2l - 1) < \pi(2l)$ for all $1 \le l \le m$.
\end{definition}

\begin{definition}[{\cite[Cf.~p.~103]{STEMBRIDGE199096}}]\label{def:pfaffian}Let $X$ be a $2m\times 2m$ skew-symmetric matrix. Then the Pfaffian $\pf(X)$ of $X$ is defined to be the function: 
\[
\pf(X) = \sum_{\pi \in \Pi_{2m}} \sgn (\pi) \Pi_{l = 1}^{m} x_{\pi(2l-1)\, \pi(2l)}.
\]
\end{definition}

The Pfaffian is a polynomial function of the entries of antisymmetric matrix $A$ which satisfies the equality $\pf^2(A) = \det(A)$. The proof of this equality and the further information on the Pfaffian could be found, for example, in~\cite[III.5]{artin2016geometric}. 
%

\begin{definition}\label{def:Dmat}
By $\D^{(n)} = (d^{(n)}_{i\,j})$ we denote the skew-symmetric $n\times n$ matrix  defined by 
$$d^{(n)}_{i\,j} = \sgn(i,j) \cdot (-1)^{i+j},\; 1 \le i, j \le n.$$
\end{definition}


Using these definitions, the discussed equality is formulated as follows:

\begin{theorem}\label{thm:DetEqPfAll}Let $n, k \ge 1$, $X = (x_{i\,j})$ a matrix of size $n\times k$ with entries from the commutative ring $\F$. If $n \ge k$, then the following equality in  $\F[x_{1\,1}, \ldots, x_{n\,k}]$ holds
\begin{equation}\label{thm:DetEqPfAll:eqq1}
\det_{n\,k}(X) = 
\begin{cases}
(-1)^{1 + 2 + \ldots + k}\pf \left(X^t \D^{(n)} X\right), & k\;\;\mbox{is even},\\[15pt]
(-1)^{1 + 2 + \ldots + (k+1)}\pf \left((X')^t \D^{(n+1)} X'\right), & k\;\;\mbox{is odd},
\end{cases}
\end{equation}
where 
 $X'$ is a $(n+1)\times (k+1)$ matrix defined by  
 $$X' =  \left(
\begin{array}{c|ccc}
\multicolumn{1}{c}{1} & 0 & \cdots & 0 \\
\cline{2-4}
0      & &   & \\
\vdots & & X & \\
0      & &   & 
\end{array}
\right).$$
If $n < k$, then the equality~\eqref{thm:DetEqPfAll:eqq1} holds for $\det_{k\,n}(X^t)$ which is equal to $\det_{n\,k}(X)$ by the definition.
\end{theorem}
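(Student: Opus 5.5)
The main tool will be the minor summation formula for Pfaffians (Ishikawa--Wakayama, the one used by Stembridge). For an $n\times n$ skew-symmetric matrix $A$ and an $n\times k$ matrix $X$ with $k$ even, it states
\[
\pf(X^t A X)=\sum_{I\subseteq[n],\,|I|=k}\pf(A_{I})\det(X_{I}),
\]
where $A_I$ is the principal submatrix of $A$ on the rows and columns $I$, and $X_I$ is the $k\times k$ submatrix of $X$ on the rows $I$. This is proved by expanding $\pf(X^tAX)$ multilinearly in the rows of $X$, or equivalently by exterior algebra. Write $\omega=\sum_{i<j}a_{ij}e_i\wedge e_j$ and let $\omega'$ be its image under $X^t$. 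Then $\omega'^{\wedge k/2}/(k/2)!$ gives the Pfaffian, and it is computed by pulling back $\omega^{\wedge k/2}/(k/2)!=\sum_{|I|=k}\pf(A_I)e_I$ through $\Lambda^kX^t$, whose matrix entries are the maximal minors $\det(X_I)$. If a self-contained argument is preferred, I would reprove the formula this way; all steps are polynomial identities, so they hold over any commutative ring.

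With this formula, the even case reduces to computing $\pf(\D^{(n)}_I)$ for $I=\{c_1<\dots<c_k\}$. The entries of $\D^{(n)}_I$ are $(-1)^{c_a+c_b}$ above the diagonal. Conjugating by the diagonal matrix $\diag((-1)^{c_1},\dots,(-1)^{c_k})$ multiplies the Pfaffian by the determinant of that diagonal, namely $(-1)^{c_1+\dots+c_k}$. This is an instance of $\pf(B^tAB)=\det(B)\pf(A)$, which is the $k=n$ case of the summation formula. After conjugation we are left with $\pf(J_k)$, where $J_k$ is the $k\times k$ skew matrix with all entries $1$ above the diagonal.

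I claim $\pf(J_k)=1$. This can be shown by induction, expanding along the first row: $\pf(J_k)=\sum_{j=2}^{k}(-1)^{j}\pf(J_{k-2})$, and the alternating sum of $k-1$ terms starting with $+$ equals $1$ because $k-1$ is odd. Alternatively, one can count the signs of perfect matchings of $[k]$ directly. Hence $\pf(X^t\D^{(n)}X)=\sum_I(-1)^{\sum c_i}\det X_I$. Multiplying by $(-1)^{1+\dots+k}$ gives exactly Definition~\ref{def:CullisDet}.

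For odd $k$, I apply the even case to the $(n+1)\times(k+1)$ matrix $X'$. This requires checking that $\det_{n+1,k+1}(X')$ equals $\det_{n\,k}(X)$. Expand $\det_{n+1,k+1}(X')$ along its first column using property~\ref{CD:Laplace}. The only nonzero entry of that column is the $1$ in position $(1,1)$, so the expansion gives $(-1)^{2}\det_{n\,k}(X'(1|1))=\det_{n\,k}(X)$. The same identity can also be seen directly from the definition: a $(k+1)$-row subset containing row $1$ contributes $\det X_{I}$, a subset omitting row $1$ has a zero column, and the sign exponents shift consistently. The case $n<k$ is immediate from the definition $\det_{n\,k}(X)=\det_{k\,n}(X^t)$.

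I expect the main obstacle to be the sign bookkeeping. Specifically, one has to confirm that the convention for $\pf$ in Definition~\ref{def:pfaffian} matches the one in the summation formula, and that $\pf(J_k)=+1$ rather than $-1$ under that convention. I would check both on $k=2$ and $k=4$ before writing the general argument.
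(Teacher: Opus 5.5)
Your proposal is correct, and for even $k$ it is the paper's own argument: apply the Ishikawa--Wakayama minor summation formula with $B=\D^{(n)}$, then evaluate $\pf(\D^{(n)}[I|I])=(-1)^{c_1+\cdots+c_k}$ by conjugating with $\diag((-1)^{c_1},\ldots,(-1)^{c_k})$ down to the all-ones skew matrix, whose Pfaffian is $1$. For odd $k$ you reduce to the even case via $\det_{(n+1)\,(k+1)}(X')=\det_{n\,k}(X)$ (Laplace expansion along the first column), which is exactly the reduction in the paper's appendix proof; the main proof instead uses part (b) of the minor summation formula with the bordered matrix $(X')^t$, which is an equivalent route since part (b) is just part (a) applied to that bordered matrix.
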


This theorem could be considered as a particular case of the following minor summation formula due to Ishikawa and Wakayama~\cite{Ishikawa01081995}.

\begin{theorem}[{Cf.~\cite[Theorem~1]{Ishikawa01081995}}]\label{thm:OkadaPf}Let $m, n$ be positive integers such that $m \le n$  and $T = (t_{i\,k})$ is an $m\times n$ matrix with entries belonging to the fixed commutative ring.
\begin{enumerate}[label=(\alph*), ref=(\alph*)]
\item\label{thm:OkadaPf:part1} Assume that $m$ is even. If $B = (b_{i\,k})$ is any $n\times n$ skew-symmetric matrix, then
\begin{multline*}
\sum_{1 \le c_1 < \ldots < c_m \le n} \pf(B[c_1,\ldots, c_m|c_1,\ldots, c_m]) \det (T(|c_1,\ldots, c_m]) = \pf (Q)\\
\mbox{as elements of}\;\;\F[t_{1\,1}, \ldots, t_{m\,n}],
\end{multline*}
where $Q$ is the skew-symmetric $m\times m$ matrix defined by $Q = T B T^t$.
\item\label{thm:OkadaPf:part2} Assume that $m$ is odd. If $B = (b_{i\,k})$ is any skew-symmetric $(n+1)\times (n+1)$ matrix, then
\begin{multline*}
\sum_{1 \le c_1 < \ldots < c_m \le n}\bigg(\pf(B[1, (c_1 + 1),\ldots, (c_m + 1)|1, (c_1 + 1),\ldots, (c_m + 1)])\\
\times \det (T(|c_1,\ldots, c_m])\bigg)
 = \pf (Q')
\end{multline*}
as elements of $\F[t_{1\,1}, \ldots, t_{m\,n}]$, where $Q' = T' B (T')^t$ and
 $$T' =  \left(
\begin{array}{c|ccc}
\multicolumn{1}{c}{1} & 0 & \cdots & 0 \\
\cline{2-4}
0      & &   & \\
\vdots & & T & \\
0      & &   & 
\end{array} 
\right).$$
\end{enumerate}
\end{theorem}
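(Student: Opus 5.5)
The plan is to prove part (a) with the exterior algebra, and then obtain part (b) from part (a) applied to $T'$. Both sides of each identity are polynomials with integer coefficients in the entries $b_{i\,k}$ ($i<k$) and $t_{i\,k}$. So it suffices to prove them in the universal ring $\mathbb{Z}[b_{i\,k}, t_{i\,k}]$ and then specialize to $\F$. Working over this torsion-free ring lets us cancel factorials freely.

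For part (a), let $e_1,\ldots,e_n$ be the standard basis of $V=\mathbb{Z}[\ldots]^n$ and $f_1,\ldots,f_m$ that of $W=\mathbb{Z}[\ldots]^m$. Put $\omega_B=\sum_{i<j} b_{i\,j}\, e_i\wedge e_j\in\Lambda^2 V$ and write $m=2r$. The first step is the standard identity
\[
\omega_B^{\,r} = r!\sum_{1\le c_1<\ldots<c_m\le n}\pf(B[c_1,\ldots,c_m|c_1,\ldots,c_m])\, e_{c_1}\wedge\cdots\wedge e_{c_m}.
\]
To check it, expand the $r$-th power. Each perfect matching of $\{c_1,\ldots,c_m\}$ occurs $r!$ times, once for each ordering of its pairs. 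Reordering the wedge factors into increasing order produces exactly the sign $\sgn(\pi)$ from Definition~\ref{def:pfaffian}.

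Next, let $\tau\colon V\to W$ be the linear map $e_k\mapsto\sum_i t_{i\,k} f_i$. It induces an algebra homomorphism $\Lambda\tau$ with the following two properties:
\[
\Lambda\tau(e_{c_1}\wedge\cdots\wedge e_{c_m})=\det(T(|c_1,\ldots,c_m])\, f_1\wedge\cdots\wedge f_m,
\]
and $\Lambda\tau(\omega_B)=\sum_{i<j}q_{i\,j} f_i\wedge f_j=\omega_Q$ with $Q=TBT^t$. The second property holds because the coefficient of $f_i\wedge f_j$ is $\sum_{k,l}t_{i\,k}b_{k\,l}t_{j\,l}$, using skew-symmetry of $B$ to combine the $k<l$ and $k>l$ terms.

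Applying $\Lambda\tau$ to $\omega_B^{\,r}$ and using that $\Lambda\tau$ is multiplicative gives $\Lambda\tau(\omega_B^{\,r})=\omega_Q^{\,r}$. The same identity applied to $Q$ (where the only index set is $[m]$) gives $\omega_Q^{\,r}=r!\,\pf(Q)\, f_1\wedge\cdots\wedge f_m$. Comparing coefficients of $f_1\wedge\cdots\wedge f_m$ and cancelling $r!$ yields (a).

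For part (b), I apply (a) to the $(m+1)\times(n+1)$ matrix $T'$ and the $(n+1)\times(n+1)$ matrix $B$; this is legitimate since $m+1$ is even and $m+1\le n+1$. Consider an $(m+1)$-subset $I$ of column indices. If $1\notin I$, the first row of $T'(|I]$ is zero, so its determinant vanishes. If $I=\{1,c_1+1,\ldots,c_m+1\}$, then expanding along the first row gives $\det T'(|I]=\det T(|c_1,\ldots,c_m]$. Hence the sum in (a) reduces exactly to the sum in (b), and the right-hand side is $\pf(T'B(T')^t)=\pf(Q')$.

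I expect the main obstacle to be the sign bookkeeping in the identity for $\omega_B^{\,r}$. One must check that moving the factors $e_{\pi(2l-1)}\wedge e_{\pi(2l)}$ into increasing order produces exactly $\sgn(\pi)$, with the convention $\pi(1)<\pi(3)<\cdots$ of Definition~\ref{def:pfaffian}. My first approach is to note that each pair is a degree-two element and therefore central in the exterior algebra. Their order is then irrelevant, and the remaining sign is that of the permutation $(\pi(1),\pi(2),\ldots,\pi(m))$ relative to the sorted sequence.
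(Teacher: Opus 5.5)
Your proof is correct. The paper does not prove this statement: it quotes it from Ishikawa--Wakayama and uses it as a black box in Section~\ref{sec:main}. So your argument supplies a self-contained proof of that cited input rather than reproducing one from the paper.

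The nearest thing in the paper is the appendix. It avoids Theorem~\ref{thm:OkadaPf} altogether and proves only the special case $B=\D^{(n)}$ needed for Theorem~\ref{thm:DetEqPfAll}. It does this by matching the coefficients of each monomial $x_{c_1\,1}\cdots x_{c_{2m}\,2m}$, using Lemma~\ref{lem:PfOfSGNIsSgn}: the Pfaffian of the matrix $(\sgn(c_i,c_j))$ equals $\sgn(c_1,\ldots,c_{2m})$. That argument is completely elementary, but it depends on the explicit entries $d^{(n)}_{i\,j}=\sgn(i,j)(-1)^{i+j}$.

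Your exterior-algebra route needs a little more machinery: the identity $\omega_B^{\,r}=r!\sum_C\pf(B[C|C])\,e_C$, multiplicativity of $\Lambda\tau$ with $\Lambda\tau(\omega_B)=\omega_{TBT^t}$, and cancelling $r!$ in the torsion-free universal ring. In exchange it gives the full Cauchy--Binet-type formula for an arbitrary skew-symmetric $B$. Your derivation of (b) from (a) by bordering ($\det T'(|I]$ vanishes when $1\notin I$ and equals $\det T(|c_1,\ldots,c_m]$ otherwise) is the same device the paper uses in the appendix to pass from even $k$ to odd $k$ via $X'$.

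One point is worth stating explicitly. Writing both sides as polynomials in the $b_{i\,k}$ with $i<k$ uses that ``skew-symmetric'' means $B^t=-B$ \emph{with zero diagonal}. This is the standard convention for Pfaffians, and it is actually needed: in characteristic $2$ a nonzero $b_{k\,k}$ would add $b_{k\,k}t_{i\,k}t_{j\,k}$ to $q_{i\,j}$ and break the identity.
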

%

We also provide a straightforward and elementary proof of Theorem~\ref{thm:DetEqPfAll} which is carried out by comparing the coefficients of the monomials on the both sides of~\eqref{thm:DetEqPfAll:eqq1} (see Section~\ref{sec:DetEqPfAllAltProof}).

Thus, the formula~\eqref{thm:DetEqPfAll:eqq1} reduces the computation of the Cullis' determinant to the computation the Pfaffian. Relying on the recently developed polynomial division-free algorithm for computing the Pfaffian~\cite{PRZEZDZIECKI2025106550}, we present an effective polynomial-time algorithm for computing the Cullis' determinant of a given matrix with entries belonging to the commutative ring (Algorithm~\ref{alg:DETNKALG}) and show that its resulting arithmetical complexity is substantially lower than the complexity of the definition-based algorithm.


%
%

In addition to the results presented above, we obtain the following consequence of Theorem~\ref{thm:DetEqPfAll}, being a row analogue of the Laplace expansion formula for the Cullis' determinant along the columns of a matrix. 

\begin{theorem}\label{thm:HorExpEven}If $n \ge 2m$, $1 \le j \le 2m$ and $X$ is an $n \times 2m$ matrix with entries from the commutative ring $\F$, then
\begin{equation*}
\det_{n\,2m}(X) = \sum_{1 \le i \neq j \le 2m} (-1)^{i + j - 1} \det_{n\,2}\bigl(X(|i,j]\bigr)\det_{n\,(2m-2)}\bigl(X(|i,j)\bigr),
\end{equation*}
where $X(|i,j]$ denotes an $n\times 2$ matrix consisting of the $i$-th and the $j$-th columns of $X$ and $X(| i, j)$ denotes an $n\times (2m-2)$ matrix obtained from $X$ by striking out its $i$-th and the $j$-th columns.  
\end{theorem}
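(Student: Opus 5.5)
The plan is to specialize Theorem~\ref{thm:DetEqPfAll} to $k=2m$ and then expand the Pfaffian along its $j$-th row. For even $k=2m$ the theorem gives
\[
\det_{n\,2m}(X)=(-1)^{m(2m+1)}\pf(Q),\qquad Q=X^t\D^{(n)}X .
\]
Here $Q=(q_{a\,b})$ is a skew-symmetric $2m\times 2m$ matrix.

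First I will use the standard row expansion of the Pfaffian. For a fixed index $j$,
\[
\pf(Q)=\sum_{i\neq j}(-1)^{i+j+1+\theta(i,j)}\,q_{i\,j}\,\pf\bigl(Q(i,j|i,j)\bigr),
\]
where $\theta(i,j)\in\{0,1\}$ records whether $i<j$ or $i>j$, and $Q(i,j|i,j)$ is $Q$ with rows and columns $i,j$ deleted. This expansion follows directly from Definition~\ref{def:pfaffian} by grouping the matchings according to the partner of $j$. If needed, I will derive it briefly there, tracking the sign change caused by deleting $i,j$ and reindexing.

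Second, I identify each factor as a Cullis' determinant. Matrix multiplication commutes with taking sub-blocks, so $Q(i,j|i,j)=X(|i,j)^t\D^{(n)}X(|i,j)$. Applying Theorem~\ref{thm:DetEqPfAll} with $k=2m-2$ gives
\[
\pf\bigl(Q(i,j|i,j)\bigr)=(-1)^{(m-1)(2m-1)}\det_{n\,(2m-2)}\bigl(X(|i,j)\bigr).
\]
For $m=1$ this needs the convention $\det_{n\,0}=1$, which makes the statement trivial. Similarly, for $a<b$ the entry $q_{a\,b}$ is the Pfaffian of the $2\times2$ matrix built from columns $a,b$. So $q_{a\,b}=(-1)^{1+2}\det_{n\,2}$ of those two columns taken in increasing order, and $q_{b\,a}=-q_{a\,b}$. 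Equivalently, $q_{i\,j}=-\det_{n\,2}$ of the ordered pair (column $i$, column $j$), using property C4 for the column swap.

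Third, I collect signs. The global constants combine to $(-1)^{m(2m+1)}\cdot(-1)\cdot(-1)^{(m-1)(2m-1)}$. Since $m(2m+1)+(m-1)(2m-1)=4m^2-2m+1$ is odd, this product equals $+1$. What remains is the factor $(-1)^{i+j+1+\theta(i,j)}$ from the expansion, together with the orientation sign of $X(|i,j]$.

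The main obstacle is exactly this bookkeeping: matching $\theta(i,j)$ against the orientation convention for $X(|i,j]$, that is, whether its columns appear in the order $i,j$ or in increasing order. I expect that when $X(|i,j]$ is read with columns in the order (column $i$, column $j$), the $\theta$-term cancels and the coefficient becomes exactly $(-1)^{i+j-1}$, as stated. I will verify this by checking the cases $i<j$ and $i>j$ separately, and by a sanity check on $m=2$, $n=4$ against Definition~\ref{def:CullisDet}.
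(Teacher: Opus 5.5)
Your route is the same as the paper's. You apply Theorem~\ref{thm:DetEqPfAll} with $k=2m$, expand the Pfaffian along the index $j$ as in Lemma~\ref{lem:PfLapExp}, and apply Theorem~\ref{thm:DetEqPfAll} again for $k=2$ and $k=2m-2$ via $Q(i,j|i,j)=X(|i,j)^t\D^{(n)}X(|i,j)$. Your global constant $+1$ is correct. So is your identity that $q_{i\,j}$ equals $-\det_{n\,2}$ of the ordered pair (column $i$, column $j$); it holds for every $i\neq j$ directly from the case $k=2$. The gap is in the one step you left open, the orientation of $X(|i,j]$, and your prediction there is the wrong way round.

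Your expansion is correct only with $\theta(i,j)=1$ for $i>j$ and $\theta(i,j)=0$ for $i<j$, which is exactly Lemma~\ref{lem:PfLapExp}. After substituting, the terms with $i<j$ carry $(-1)^{i+j-1}$. The terms with $i>j$ carry $(-1)^{i+j}$ times $\det_{n\,2}$ of (column $i$, column $j$). So the $\theta$-sign does \emph{not} cancel if $X(|i,j]$ is read as (column $i$, column $j$). It cancels only after you swap the two columns into increasing order (property C4). That is the paper's set-based convention: $X(|i,j]=X(|j,i]$, with columns always in increasing order, as stressed in Section~\ref{sec:notation}.

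Under your reading the statement is false. For $m=1$ and $j=1$ it would say $\det_{n\,2}(X)=\det_{n\,2}$ of (column $2$, column $1$), which is $-\det_{n\,2}(X)$. So the case $m=1$ is trivial only under the set-based reading. For $n=4$, $X=I_4$, $j=1$, your right-hand side evaluates to $-1\neq 1=\det_4(I_4)$. Your planned sanity check with $m=2$, $n=4$ would miss this if you took $j=2m$: then every $i<j$ and the two readings coincide, so use $j=1$ instead. Once $X(|i,j]$ is read with increasing column order, your argument is complete and coincides with the paper's proof.
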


\begin{theorem}\label{thm:HorExpOdd}If $n \ge 2m-1$ and $X$ is an $n \times (2m-1)$ matrix with entries from the commutative ring $\F$, then
\begin{equation*}
\det_{n\, (2m-1)}(X) = \sum_{1 \le i \le 2m-1} (-1)^{i - 1} \det_{n\,1}\left(X(|i]\right)\det_{n\,(2m-2)}\left(X(| i)\right),
\end{equation*}
where $X(|i]$ denotes an $n\times 1$ matrix consisting of the $i$-th  column of $X$ and $X(| i)$ denotes an $n\times (2m-2)$ matrix obtained from $X$ by striking out its $i$-th column. 
\end{theorem}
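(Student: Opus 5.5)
We derive the formula from the odd case of Theorem~\ref{thm:DetEqPfAll} by expanding the Pfaffian along its first row. Put $k = 2m-1$, so $k+1 = 2m$. Since $n \ge 2m-1$, Theorem~\ref{thm:DetEqPfAll} gives
\[
\det_{n\,(2m-1)}(X) = (-1)^{1+2+\ldots+2m}\pf(Q'), \qquad Q' = (X')^t \D^{(n+1)} X',
\]
where $Q'$ is a $2m\times 2m$ skew-symmetric matrix. We use the standard first-row expansion of the Pfaffian, which follows directly from Definition~\ref{def:pfaffian} by grouping the permutations $\pi\in\Pi_{2m}$ according to $\pi(2)$ (note that $\pi(1)=1$ always):
\[
\pf(Q') = \sum_{j=2}^{2m} (-1)^{j} q'_{1\,j}\, \pf\bigl(Q'(1,j|1,j)\bigr).
\]
Here $Q'(1,j|1,j)$ is $Q'$ with rows and columns $1$ and $j$ removed.

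\textbf{Step 1: the entries $q'_{1\,j}$.} The first row of $(X')^t$ is $e_1^t$, so
\[
q'_{1\,j} = \sum_{r} d^{(n+1)}_{1\,r} X'_{r\,j}.
\]
For $j\ge 2$ the term $r=1$ vanishes because $X'_{1\,j}=0$. For $r\ge 2$ we have $d^{(n+1)}_{1\,r} = (-1)^{1+r}$ and $X'_{r\,j} = x_{r-1,\,j-1}$. Hence, writing $s = r-1$,
\[
q'_{1\,j} = \sum_{s=1}^n (-1)^{s} x_{s\,(j-1)}.
\]
On the other hand, Definition~\ref{def:CullisDet} with $k=1$ gives
\[
\det_{n\,1}(v) = \sum_{c}(-1)^{c+1}v_c .
\]
Therefore $q'_{1\,j} = -\det_{n\,1}\bigl(X(|j-1]\bigr)$.

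\textbf{Step 2: the complementary Pfaffians.} Let $Y$ be $X'$ with its columns $1$ and $j$ deleted. Then
\[
Q'(1,j|1,j) = Y^t\D^{(n+1)}Y .
\]
The first row of $Y$ is zero, and the remaining rows form $X(|j-1)$. The lower-right $n\times n$ block of $\D^{(n+1)}$ equals $\D^{(n)}$, since shifting both indices by one preserves $\sgn(i,j)(-1)^{i+j}$. Hence
\[
Q'(1,j|1,j) = X(|j-1)^t\,\D^{(n)}\,X(|j-1).
\]
Since $2m-2$ is even and $n\ge 2m-2$, the even case of Theorem~\ref{thm:DetEqPfAll} yields
\[
\pf\bigl(Q'(1,j|1,j)\bigr) = (-1)^{1+\ldots+(2m-2)}\det_{n\,(2m-2)}\bigl(X(|j-1)\bigr).
\]
For $m=1$ we use the conventions that the empty Pfaffian and $\det_{n\,0}$ both equal $1$.

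\textbf{Step 3: sign bookkeeping.} Set $i=j-1$ and substitute Steps 1 and 2 into the expansion. The total sign of the $i$-th term is
\[
(-1)^{m(2m+1)}\cdot(-1)^{j}\cdot(-1)\cdot(-1)^{(m-1)(2m-1)}.
\]
The exponent $m(2m+1)+(m-1)(2m-1) = 4m^2-2m+1$ is odd. So the sign equals $(-1)^{j} = (-1)^{i+1} = (-1)^{i-1}$, which is exactly the sign in the claimed formula.

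The only delicate points are this sign bookkeeping and the correct sign convention in the Pfaffian row expansion. To guard against errors, I would check the result against the case $m=1$, where both sides reduce to $\det_{n\,1}(X)$.
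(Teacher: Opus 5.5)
Your proof is correct. It is essentially the paper's argument with one layer unfolded.

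The paper never expands a Pfaffian in this proof. It proceeds in three steps:
\begin{enumerate}
\item It writes $\det_{n\,(2m-1)}(X)=\det_{(n+1)\,2m}(X')$ by Laplace expansion along the first column.
\item It applies Theorem~\ref{thm:HorExpEven} to $X'$ with $j=1$.
\item It converts the factors back: Laplace expansion gives $\det_{(n+1)\,2}\bigl(X'(|1,i+1]\bigr)=\det_{n\,1}\bigl(X(|i]\bigr)$, and Lemma~\ref{lem:DetNKZeroFirst} (a prepended zero row does not change $\det_{n\,k}$ for even $k$) gives $\det_{(n+1)\,(2m-2)}\bigl(X'(|1,i+1)\bigr)=\det_{n\,(2m-2)}\bigl(X(|i)\bigr)$.
\end{enumerate}

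Theorem~\ref{thm:HorExpEven} is itself proved by expanding $\pf(X^t\D^{(n)}X)$. So both arguments end up expanding the same Pfaffian $\pf\bigl((X')^t\D^{(n+1)}X'\bigr)$ along its first row/column. The difference is that you identify the pieces on the matrix side:
\begin{itemize}
\item Your observation that the lower-right $n\times n$ block of $\D^{(n+1)}$ is $\D^{(n)}$ takes the place of Lemma~\ref{lem:DetNKZeroFirst}. Combined with Theorem~\ref{thm:DetEqPfAll}, it even reproves that lemma.
\item Your direct computation of $q'_{1\,j}$ in Step~1 replaces the Laplace identity for the two-column factors.
\end{itemize}

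The paper's route buys reuse: the sign bookkeeping was already done in Theorem~\ref{thm:HorExpEven}, so only the shift $i=i'+1$ remains. Yours is self-contained, resting only on Theorem~\ref{thm:DetEqPfAll} and the first-row Pfaffian expansion. The cost is redoing the sign count, and you did it correctly: the exponent $4m^2-2m+1$ is odd, giving the sign $(-1)^{i-1}$.

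Your treatment of $m=1$ via the conventions $\det_{n\,0}=1$ and empty Pfaffian $=1$ is exactly what the statement itself implicitly requires.
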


%

The further text is organized as follows: in Section~\ref{sec:notation} we introduce the necessary notation, definitions and preliminary facts regarding the Cullis' determinant and the Pfaffian; in Section~\ref{sec:main} we explain how the formula~\eqref{thm:DetEqPfAll:eqq1} follows from Ishikawa-Wakayama minor summation formula; in Section~\ref{sec:computation} we present an efficient polynomial-time algorithm for finding $\det_{n\,k}(X)$ using the formula~\eqref{thm:DetEqPfAll:eqq1} (Algorithm~\ref{alg:DETNKALG}) and analyze its computational complexity in comparison to the definition-based algorithm; Section~\ref{sec:horizexp} consists of the proofs of Theorem~\ref{thm:HorExpEven} and Theorem~\ref{thm:HorExpOdd} on the horizontal expansion of the Cullis' determinant; in the appendix we provide an alternative elementary proof of Theorem~\ref{thm:DetEqPfAll} (see Section~\ref{sec:DetEqPfAllAltProof}). For this, we provide the necessary additional definitions and supplementary facts in  Section~\ref{sec:addnot} and prove the necessary properties of the sign of a tuple of integers in Section~\ref{sec:sign}.

\section{Notation, definitions, and preliminary facts}
\label{sec:notation}

By $\F$ we denote a commutative ring. $\M_{n\, k}(\F)$ denotes the set of all $n\times k$ matrices with the entries from $\F.$  Let $E_{i\,j}\in \M_{n\,k}$ denote the matrix unit, i.e. all elements of $E_{i\,j}$ are equal to 0 except the element belonging to the intersection of the $i$-th row and $j$-th column which is equal to 1. By $\diag(a_1,\ldots, a_n) \in \M_{n\,n}(\F)$ we denote a \emph{diagonal matrix} and define it by $\diag(a_1,\ldots, a_n) = \sum_{i = 1}^{n} a_i E_{i\,i}$. By $x_{i\, j}$ we denote the element of a matrix $X$ lying on the intersection of its $i$-th row and $j$-th column. The notation $X_{i\,j}$ is also used for this purpose if it is necessary. 


For $A \in \M_{n\,k}(\F)$ by $A^t \in \M_{k\,n}(\F)$ we denote a transpose of the matrix $A$, i.e. $A^{t}_{i\,j} = A_{j\,i}$ for all $1 \le i \le k, 1 \le j \le n$.

We use the notation for submatrices following~\cite{Minc1984}. That is, given a matrix $A$, by $A[I|J]$ we denote the $|I| \times |J|$ submatrix of $A$ lying on the intersection of the rows with the indices from the set $I$ and the columns with the indices from the set $J$. By $A(I|J)$ we denote a submatrix of $A$ derived from it by striking out from it the rows with indices belonging to $I$ and the columns with the indices belonging to $J$. The absence of one of index set $I, J$ means that the corresponding set is the empty set, e.g., $A(I|)$ denotes a matrix derived from $A$ by striking out from it the rows with indices belonging to $I$. If one of $I, J$ is given explicitly, then the curly brackets used in set notation are omitted, e.g., we write $A[1,2|3,4]$ instead of $A[\{1,2\}|\{3,4\}]$. The notation with mixed brackets is also used, e.g., $A(|1]$ denotes the first column of the matrix $A$. We underline that this notation is set-based, i. e., $A[2,1|3,4] = A[1,2|3,4]$. This fact, for example, is important for the formulation and the proof of Theorem~\ref{thm:HorExpEven}. 

We use the bold font to denote vectors and lower indices to denote their coordinates. In the case if we need the series of vectors, we use the upper indices placed in braces. E.g., if $\mathbf v = \begin{pmatrix}1 \\ 0\end{pmatrix},$ then $\mathbf v^t = \begin{pmatrix}1 & 0\end{pmatrix}$ and $\mathbf v_1 = 1$. If $\mathbf u^{(1)} = \begin{pmatrix}1 \\ 0\end{pmatrix}, \mathbf u^{(2)} =  \begin{pmatrix}0 \\ 1\end{pmatrix}$, then $\mathbf u^{(1)}_1 = 1$ and $\mathbf u^{(2)}_1 = 0$.  

The following properties of the Pfaffian are used throughout this paper.

\begin{lemma}[{Cf.~\cite[Proposition~2.3(b)]{STEMBRIDGE199096}}]\label{lem:PfOfProdTrans}Let $A, B \in \M_{2m\, 2m}(\F)$ and $A$ is skew-symmetric. Then
\[
\pf(BAB^t) = \det(B) \pf (A).\]
\end{lemma}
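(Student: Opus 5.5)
The plan is to prove the identity $\pf(BAB^t)=\det(B)\pf(A)$ as a polynomial identity in the entries of $B$ and of $A$, with $A$ skew-symmetric. Concretely, we work over $\mathbb Z[b_{i\,j}, a_{i\,j}\,(i<j)]$ and then specialize to an arbitrary commutative ring $\F$. The argument uses the multilinear-form description of the Pfaffian rather than a squaring argument, because $\pf^2=\det$ alone only determines $\pf(BAB^t)$ up to sign. To set this up, attach to a skew-symmetric $A$ the $2$-form $\omega_A=\sum_{i<j}a_{i\,j}\,e_i\wedge e_j$ in the exterior algebra $\Lambda(\F^{2m})$. The first step is the standard identity
\[
\frac{\omega_A^{m}}{m!}=\pf(A)\,e_1\wedge\cdots\wedge e_{2m}.
\]
Here $\omega_A^m/m!$ means the sum over unordered choices of $m$ distinct terms $a_{i\,j}e_i\wedge e_j$, which is division-free. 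This identity is verified directly from Definition~\ref{def:pfaffian}. Each perfect matching of $[2m]$ is encoded by a unique $\pi\in\Pi_{2m}$, and reordering $e_{\pi(1)}\wedge e_{\pi(2)}\wedge\cdots\wedge e_{\pi(2m)}$ to $e_1\wedge\cdots\wedge e_{2m}$ produces exactly $\sgn(\pi)$. The $2$-forms commute, so the order of the factors does not matter.

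Next, let $\varphi$ be the linear map with matrix $B^t$, so that $\varphi(e_k)=\sum_i b_{i\,k}e_i$. Extend $\varphi$ to the algebra endomorphism $\Lambda\varphi$ of the exterior algebra. A direct computation gives
\[
\Lambda\varphi(\omega_A)=\sum_{k<l}a_{k\,l}\,\varphi(e_k)\wedge\varphi(e_l)=\sum_{i<j}\Bigl(\sum_{k,l}b_{i\,k}a_{k\,l}b_{j\,l}\Bigr)e_i\wedge e_j ,
\]
where skew-symmetry of $A$ is used to pass from $k<l$ to the sum over all $k,l$. This shows $\Lambda\varphi(\omega_A)=\omega_{BAB^t}$. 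Since $\Lambda\varphi$ is an algebra homomorphism, it commutes with taking the divided power $\omega^m/m!$. On the top degree, $\Lambda\varphi$ acts by multiplication by $\det(B^t)=\det(B)$. Applying $\Lambda\varphi$ to the identity from the first step therefore gives
\[
\pf(BAB^t)\,e_1\wedge\cdots\wedge e_{2m}=\det(B)\pf(A)\,e_1\wedge\cdots\wedge e_{2m},
\]
which is the claim.

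The main obstacle is the divided power step, since $m!$ need not be invertible in $\F$. I will handle it by proving everything in the torsion-free ring $\mathbb Z[b_{i\,j},a_{i\,j}]$. There $\omega^m=m!\,\pf\cdot e_{[2m]}$, so the identity $m!\,\pf(BAB^t)=m!\,\det(B)\pf(A)$ holds. Cancelling $m!$ in this integral domain yields a polynomial identity over $\mathbb Z$, and that identity specializes to any commutative ring $\F$. The remaining bookkeeping consists of two checks: the sign convention identifying $\sgn(\pi)$ with the wedge reordering sign, and the transpose convention making the coefficient matrix $BAB^t$ rather than $B^tAB$.
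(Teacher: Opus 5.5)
Your proof is correct. It differs from the paper in that the paper gives no proof of this lemma at all; it imports the identity from Stembridge's Proposition~2.3(b).

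You make it self-contained via the exterior algebra:
\begin{itemize}
\item the identity $\omega_A^m=m!\,\pf(A)\,e_1\wedge\cdots\wedge e_{2m}$ is read off directly from Definition~\ref{def:pfaffian};
\item the algebra endomorphism induced by $e_k\mapsto\sum_i b_{i\,k}e_i$ sends $\omega_A$ to $\omega_{BAB^t}$ and acts on the top degree by $\det(B)$;
\item cancelling $m!$ in the torsion-free ring $\mathbb Z[b_{i\,j},a_{i\,j}]$ and then specializing gives the identity over every commutative ring.
\end{itemize}
That last point is exactly the generality the paper's division-free remark after Theorem~\ref{thm:DetEqPfAll} relies on. The citation buys brevity; your argument buys independence from the reference.

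A few points of tidying:
\begin{itemize}
\item You can drop the discussion of divided powers commuting with $\Lambda\varphi$. Your ``sum over unordered choices of terms'' is defined through a particular presentation of $\omega$, so its compatibility with $\Lambda\varphi$ would itself need proof. The generic computation with $\omega^m$ followed by cancellation already is the whole argument.
\item In the column convention, the map $e_k\mapsto\sum_i b_{i\,k}e_i$ has matrix $B$, not $B^t$. This is harmless, since $\det B^t=\det B$.
\item The step from $\sum_{k<l}$ to $\sum_{k,l}$ uses $a_{k\,k}=0$, so ``skew-symmetric'' must mean alternating. This matters: over $\mathbb F_2$ the matrix $A=E_{1\,1}$ satisfies $A^t=-A$ and $\pf(A)=0$, yet $B=\begin{psmallmatrix}1&0\\1&1\end{psmallmatrix}$ gives $\pf(BAB^t)=1$. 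Your generic matrix, with variables only for $i<j$, builds this in correctly.
\item The squaring route is not as hopeless as you suggest. In the domain $\mathbb Z[b_{i\,j},a_{i\,j}]$, the equality $\pf(BAB^t)^2=\bigl(\det(B)\pf(A)\bigr)^2$ forces a single global sign, and setting $B=I$ fixes it to $+$. That route also works once $\pf^2=\det$ is known over $\mathbb Z$.
\end{itemize}
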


\begin{lemma}[{Cf.~\cite[Proposition~2.3(c)]{STEMBRIDGE199096}}]\label{lem:PfD}Let $G = (g_{i\,j}) \in \M_{2m\, 2m}(\F)$ be a skew-symmetric matrix defined by $g_{i\,j} = 1$ for all $1 \le i <  j \le 2m$. Then $\pf (G) = 1$.
\end{lemma}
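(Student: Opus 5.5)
We argue by induction on $m$, using the expansion of the Pfaffian along its first row, which we derive directly from Definition~\ref{def:pfaffian}. Write $G_{2m}$ for the matrix of the statement. For $m=1$, $\Pi_2=\{(1,2)\}$, so $\pf(G_2)=g_{1\,2}=1$.

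\emph{Step 1 (first-row expansion).} For any skew-symmetric $2m\times 2m$ matrix $A$ we show
\[
\pf(A)=\sum_{j=2}^{2m}(-1)^{j}\,a_{1\,j}\,\pf\bigl(A(1,j|1,j)\bigr).
\]
For every $\pi\in\Pi_{2m}$, the conditions $\pi(1)<\pi(3)<\ldots<\pi(2m-1)$ and $\pi(2l-1)<\pi(2l)$ force $\pi(1)=1$. Hence we may partition $\Pi_{2m}$ according to the value $j=\pi(2)\in\{2,\ldots,2m\}$.

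Fix $j$. The remaining entries $(\pi(3),\ldots,\pi(2m))$ form a sequence on $[2m]\setminus\{1,j\}$. Relabel it by the order-preserving bijection $[2m]\setminus\{1,j\}\to[2m-2]$. This gives a bijection between these permutations and $\Pi_{2m-2}$, and the product $\prod_{l\ge2}a_{\pi(2l-1)\,\pi(2l)}$ becomes the corresponding monomial for $A(1,j|1,j)$.

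For the sign, count the inversions of $(1,j,\pi(3),\ldots,\pi(2m))$:
\begin{itemize}
\item the entry $1$ in first position creates none;
\item the entry $j$ in second position creates exactly $j-2$, one with each of $2,\ldots,j-1$ appearing later;
\item the inversions among the later entries are unchanged by the order-preserving relabeling.
\end{itemize}
So $\sgn(\pi)=(-1)^{j}\sgn(\pi')$, where $\pi'\in\Pi_{2m-2}$ is the relabeled permutation. Summing over $\pi$ gives the expansion.

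\emph{Step 2 (apply to $G_{2m}$).} Deleting rows and columns $1$ and $j$ from $G_{2m}$ leaves a skew-symmetric matrix with all entries above the diagonal equal to $1$, namely $G_{2m-2}$. Also $g_{1\,j}=1$ for every $j\ge 2$. Therefore
\[
\pf(G_{2m})=\pf(G_{2m-2})\sum_{j=2}^{2m}(-1)^j .
\]
The sum has $2m-1$ alternating terms beginning with $+1$, so it equals $1$. By the induction hypothesis, $\pf(G_{2m})=\pf(G_{2m-2})=1$.

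The only step needing care is the sign bookkeeping in Step 1, namely that $j$ in second position contributes exactly $j-2$ inversions and nothing else changes. The rest is immediate.
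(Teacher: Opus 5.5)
Your argument is correct and complete. The paper gives no proof of this lemma; it simply imports it from \cite[Proposition~2.3(c)]{STEMBRIDGE199096}, so your induction is a genuinely different, self-contained route.

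Your Step~1 is the case $j=1$ of Lemma~\ref{lem:PfLapExp}, rewritten using $a_{i\,1}=-a_{1\,i}$, so that $(-1)^{i+1}a_{i\,1}=(-1)^{i}a_{1\,i}$. You could have quoted that lemma instead. Since the paper also only cites it, deriving it from Definition~\ref{def:pfaffian} has a real merit: it checks that the paper's own conventions (the normalization of $\Pi_{2m}$ and the inversion-count $\sgn$) produce exactly this sign. Your inversion count is right: $j-2$ for the entry in second position, none for the leading $1$, and none altered by the order-preserving relabeling.

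The key structural observation is that every $G(1,j|1,j)$ is again the all-ones-above-the-diagonal matrix of size $2m-2$. This reduces everything to $\sum_{j=2}^{2m}(-1)^j=1$.

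The citation buys brevity. Your proof removes the dependence on an external source for Lemma~\ref{lem:PfSubK} and Lemma~\ref{lem:PfOfSGNIsSgn}, both of which use this statement. It is also visibly division-free, hence valid over any commutative ring, as the paper's remark requires. (You could not instead obtain the statement from Lemma~\ref{lem:PfOfSGNIsSgn} with $c_i=i$, since that lemma's proof relies on the present one.)
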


\begin{lemma}[{Cf.~\cite[Formula~(D1) on p.~116]{Fulton1998}}]\label{lem:PfLapExp}If $X = (x_{i\,j}) \in \M_{2m\, 2m}(\F)$ and $1 \le j \le 2m$, then
\begin{equation*}
\pf(X) = \sum_{1 \le i < j} (-1)^{i + j - 1} x_{i\,j}\pf\left(X(i, j| i, j)\right) + \sum_{j < i \le 2m} (-1)^{i + j} x_{i\,j}\pf \left(X(i, j| i, j)\right).
\end{equation*}
\end{lemma}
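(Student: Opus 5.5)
The plan is to prove the expansion directly from Definition~\ref{def:pfaffian} by grouping the terms of $\pf(X)$ according to the partner of the fixed index $j$. Throughout, $X$ is assumed skew-symmetric, as the definition of $\pf$ requires.

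\textbf{Step 1 (bijection).} Every $\pi\in\Pi_{2m}$ determines a perfect matching of $[2m]$ into the pairs $\{\pi(2l-1),\pi(2l)\}$. Conversely, every perfect matching determines a unique $\pi\in\Pi_{2m}$: write each pair in increasing order and list the pairs by increasing smaller element.

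In this matching, $j$ is paired with a unique $i\neq j$. Removing the pair $\{i,j\}$ leaves a perfect matching of $[2m]\setminus\{i,j\}$. Relabel these indices in an order-preserving way onto $[2m-2]$. This gives a bijection between
\[
\{\pi\in\Pi_{2m} : j \text{ is paired with } i\}
\quad\text{and}\quad
\Pi_{2m-2}.
\]
Under the relabelling, the product of the remaining entries $x_{\pi(2l-1)\,\pi(2l)}$ is exactly the corresponding monomial in the Pfaffian of $X(i,j|i,j)$. Hence
\[
\pf(X)=\sum_{i\neq j} \varepsilon_{i} \, x_{\min(i,j)\,\max(i,j)} \, \pf\bigl(X(i,j|i,j)\bigr),
\]
where $\varepsilon_i$ is the sign relating $\sgn(\pi)$ to the sign of the reduced permutation.

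\textbf{Step 2 (sign bookkeeping, the main obstacle).} Put $a=\min(i,j)$ and $b=\max(i,j)$. I claim that $\varepsilon_i=(-1)^{a+b-1}$, independently of the rest of the matching.

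First, view $\pi$ as a word made of $m$ two-letter blocks. Swapping two adjacent blocks amounts to two transpositions of positions, so it does not change $\sgn$. Hence we may move the block $(a,b)$ to the front without changing the sign.

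Next, count the inversions involving the front letters of the word $(a,b,\text{rest})$:
\begin{itemize}[label={}]
\item $a$ precedes all $a-1$ smaller letters in the rest, giving $a-1$ inversions;
\item $b$ precedes the $b-2$ letters of the rest that are smaller than $b$ (excluding $a$), giving $b-2$ inversions;
\item the pair $(a,b)$ itself is not an inversion.
\end{itemize}
Therefore
\[
\sgn(\pi)=(-1)^{(a-1)+(b-2)}\,\sgn(\text{rest})=(-1)^{a+b-1}\,\sgn(\text{rest}).
\]
Finally, the order-preserving relabelling of the rest onto $[2m-2]$ does not change the inversion count. The rest is already in $\Pi$-normal form, because the block moves preserved the relative order of the other blocks. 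This proves the claim.

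\textbf{Step 3 (matching the statement).} For $i<j$ we have $a=i$ and $b=j$, so the term is $(-1)^{i+j-1}x_{i\,j}\pf(X(i,j|i,j))$, which is the first sum.

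For $i>j$ the Pfaffian monomial contains $x_{j\,i}$. By skew-symmetry $x_{j\,i}=-x_{i\,j}$, so
\[
(-1)^{i+j-1}x_{j\,i}=(-1)^{i+j}x_{i\,j},
\]
which is the second sum.

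The only delicate point is the sign computation in Step~2, in particular checking that the block rearrangement and the relabelling commute with the normal form defining $\Pi_{2m}$. The degenerate case $m=1$ is immediate, with the convention that the Pfaffian of the empty matrix is $1$.
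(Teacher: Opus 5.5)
Your proof is correct. It differs from the paper only because the paper gives no proof of this lemma: it quotes the expansion from Fulton--Pragacz (formula (D1)) and uses it as a black box in the proof of Theorem~\ref{thm:HorExpEven}.

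You derive it directly from Definition~\ref{def:pfaffian}. The bijection between $\Pi_{2m}$ and perfect matchings lets you group the terms by the partner $i$ of $j$. The relabelled remainder is exactly the element of $\Pi_{2m-2}$ indexing the corresponding term of $\pf\bigl(X(i,j|i,j)\bigr)$. The sign $(-1)^{a+b-1}$ then follows from two observations: moving the block $(a,b)$ to the front is an even rearrangement, since swapping two adjacent $2$-blocks is the position permutation $(1\,3)(2\,4)$; and the block at the front creates $(a-1)+(b-2)$ inversions.

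The citation buys brevity. Your argument buys self-containedness and makes two implicit points explicit. First, the identity is a combinatorial, division-free polynomial identity, so it holds over an arbitrary commutative ring $\F$, which is the setting the paper needs. Second, skew-symmetry of $X$ is a genuine hypothesis, even though the statement only says $X\in\M_{2m\,2m}(\F)$. The second sum appears only after rewriting $x_{j\,i}=-x_{i\,j}$. For instance, with $m=1$ and $j=1$ the formula reads $\pf(X)=-x_{2\,1}$, which equals $x_{1\,2}$ only by skew-symmetry.
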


\section{Expressing $\det_{n\,k}$ through the Pfaffian}
\label{sec:main}

In this section we explain how Theorem~\ref{thm:DetEqPfAll} follows from Theorem~\ref{thm:OkadaPf}. The next two technical lemmas concerning the Pfaffians of the submatrices of $\D^{(n)}$ are required for this.

\begin{lemma}\label{lem:PfSubK}Let $k \le n$ and  $1 \le c_1 < \ldots < c_k \le n$  be positive integers. Then
\begin{equation}\label{lem:PfSubK:eqq1}
\D^{(n)}[c_1,\ldots, c_k|c_1,\ldots, c_k] = (-1)^{c_1 + \ldots + c_k}.
\end{equation}
\end{lemma}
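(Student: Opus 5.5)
The equality~\eqref{lem:PfSubK:eqq1} only makes sense when its left-hand side is read as the Pfaffian $\pf\bigl(\D^{(n)}[c_1,\ldots,c_k|c_1,\ldots,c_k]\bigr)$ with $k$ even, since the Pfaffian is defined only for even-sized skew-symmetric matrices. This is also the form in which it will be fed into Theorem~\ref{thm:OkadaPf}, so I will prove that version. The plan is to factor the submatrix as a congruence of the matrix $G$ from Lemma~\ref{lem:PfD} by a diagonal sign matrix, and then apply Lemma~\ref{lem:PfOfProdTrans}.

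\emph{Step 1: compute the entries.} Put $A = \D^{(n)}[c_1,\ldots,c_k|c_1,\ldots,c_k]$, so that $a_{i\,j} = d^{(n)}_{c_i\,c_j} = \sgn(c_i,c_j)\,(-1)^{c_i+c_j}$. Because $c_1<\ldots<c_k$ is strictly increasing, $\sgn(c_i,c_j)=\sgn(i,j)$ for all $i,j\in[k]$. This sign is $1$ for $i<j$, $-1$ for $i>j$, and $0$ for $i=j$. Hence
\[
a_{i\,j} = (-1)^{c_i}\, g_{i\,j}\, (-1)^{c_j},
\]
where $G=(g_{i\,j})=(\sgn(i,j))$ is exactly the $k\times k$ skew-symmetric matrix of Lemma~\ref{lem:PfD}.

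\emph{Step 2: write the congruence.} Let $S=\diag\bigl((-1)^{c_1},\ldots,(-1)^{c_k}\bigr)$. Step 1 says precisely that $A = S G S = S G S^t$, since $S$ is diagonal and therefore $S = S^t$.

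\emph{Step 3: apply the Pfaffian lemmas.} With $k=2m$, Lemma~\ref{lem:PfOfProdTrans} gives $\pf(A)=\det(S)\,\pf(G)$. Lemma~\ref{lem:PfD} gives $\pf(G)=1$. Finally $\det(S)=\prod_{i=1}^k(-1)^{c_i}=(-1)^{c_1+\ldots+c_k}$, which is the claimed value.

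The argument has no real obstacle. The only point needing care is the identification $\sgn(c_i,c_j)=\sgn(i,j)$, which uses that the indices are listed in increasing order. It is also worth noting explicitly that for odd $k$ the statement must instead be understood through the bordered matrix of Theorem~\ref{thm:OkadaPf}\ref{thm:OkadaPf:part2}. That case is presumably the content of the next lemma, and it can be handled by the same diagonal-congruence trick with $S$ extended by an extra diagonal entry for the index $1$.
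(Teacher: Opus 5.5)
Your proof is correct and matches the paper's argument: the paper also writes the principal submatrix as $\diag((-1)^{c_1},\ldots,(-1)^{c_k})\, G\, \diag((-1)^{c_1},\ldots,(-1)^{c_k})$ and then applies Lemma~\ref{lem:PfOfProdTrans} and Lemma~\ref{lem:PfD}. Reading the left-hand side as a Pfaffian with $k$ even is also the intended meaning, since the paper's own proof computes $\pf$ of this submatrix.
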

\begin{proof}Indeed,
\begin{multline}\label{lem:PfSubK:eq1}
\D^{(n)}[c_1,\ldots, c_k|c_1,\ldots, c_k] = \begin{pmatrix}0 & (-1)^{c_1 + c_2} & \cdots & (-1)^{c_1+c_k}\\ \vdots & \vdots & \ddots & \vdots\\ -(-1)^{c_1 + c_{k}} & -(-1)^{c_2 + c_{k}} & \cdots & 0\end{pmatrix}\\
= \diag((-1)^{c_1}, \ldots, (-1)^{c_k}) G \diag((-1)^{c_1}, \ldots, (-1)^{c_k}),
\end{multline}
where $G = \begin{psmallmatrix}0 & 1 & \cdots & 1\\-1 & 0 & \cdots & 1\\ \vdots & \vdots & \ddots & \vdots\\ -1 & -1 & \cdots & -1\end{psmallmatrix}$ is the skew-symmetric matrix defined in Lemma~\ref{lem:PfD}. Hence,
\begin{multline*}
\pf(\D^{n}[c_1,\ldots, c_k|c_1,\ldots, c_k])\\
\overset{\eqref{lem:PfSubK:eq1}}{=\joinrel=} \pf\left(\diag((-1)^{c_1}, \ldots, (-1)^{c_k}) G \diag((-1)^{c_1}, \ldots, (-1)^{c_k})\right)\\
\overset{\mbox{\scriptsize Lemma~\ref{lem:PfOfProdTrans}}}{=\joinrel=\joinrel=\joinrel=\joinrel=\joinrel=} \det(\diag((-1)^{c_1}, \ldots, (-1)^{c_k})) \pf(G) = (-1)^{c_1 + \ldots + c_k} \pf(G)\\
 \overset{\mbox{\scriptsize Lemma~\ref{lem:PfD}}}{=\joinrel=\joinrel=\joinrel\joinrel=\joinrel=\joinrel=} (-1)^{c_1 + \ldots + c_k} \cdot 1 = (-1)^{c_1 + \ldots + c_k}.
\end{multline*}
\end{proof}

\begin{lemma}\label{lem:PfSubKP1}Let $k < n$ and  $1 \le c_1 < \ldots < c_k < n$  be positive integers. Then
\begin{equation}\label{lem:PfSubKP1:eqq1}
\D^{(n)}[1, (c_1 + 1),\ldots, (c_k + 1)|1, (c_1 + 1),\ldots, (c_k + 1)] = (-1)^{c_1 + \ldots + c_k + (k+1)}.
\end{equation}
\end{lemma}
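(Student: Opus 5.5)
The statement should be read with $\pf$ applied to the left-hand side, as in the proof of Lemma~\ref{lem:PfSubK}. Note that $k+1$ must be even for the Pfaffian to be defined, but the argument does not use this. I would reduce the claim to Lemma~\ref{lem:PfSubK} and then correct for the special index $1$.

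Set $d_0 = 1$ and $d_l = c_l + 1$ for $1 \le l \le k$. Then $1 = d_0 < d_1 < \ldots < d_k \le n$, so the index set $\{1, c_1+1, \ldots, c_k+1\}$ is already in increasing order. This is exactly the situation of Lemma~\ref{lem:PfSubK}, applied with $k+1$ indices $d_0, \ldots, d_k$ (and $k+1 \le n$ holds because $c_k < n$ forces $k < n$). That lemma gives
\[
\pf\bigl(\D^{(n)}[d_0,\ldots,d_k|d_0,\ldots,d_k]\bigr) = (-1)^{d_0 + d_1 + \ldots + d_k} = (-1)^{1 + (c_1+1) + \ldots + (c_k+1)} = (-1)^{c_1 + \ldots + c_k + k + 1},
\]
which is exactly~\eqref{lem:PfSubKP1:eqq1}.

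If one prefers not to rely on Lemma~\ref{lem:PfSubK} as a black box, the same computation can be repeated directly. Write the submatrix as $\diag((-1)^{d_0},\ldots,(-1)^{d_k})\, G\, \diag((-1)^{d_0},\ldots,(-1)^{d_k})$, using the fact that $d_i < d_j$ for $i<j$, so every entry above the diagonal has sign $+1$ before the diagonal twisting. Then apply Lemma~\ref{lem:PfOfProdTrans} and Lemma~\ref{lem:PfD}.

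The only step requiring care is the bookkeeping of exponents: checking that the shift $c_l \mapsto c_l+1$ contributes exactly $k$, and that the extra index $1$ contributes the remaining $1$. I do not expect any genuine obstacle.
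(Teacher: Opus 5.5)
Your proposal is correct and follows the paper's proof: apply Lemma~\ref{lem:PfSubK} to the increasing sequence $1 < c_1+1 < \cdots < c_k+1 \le n$ and rewrite $1 + (c_1+1) + \cdots + (c_k+1)$ as $c_1 + \cdots + c_k + (k+1)$. You are also right that $\pf$ is missing from the left-hand side of the statement. One small correction: your remark that the parity of $k+1$ is not used is not quite accurate. Lemma~\ref{lem:PfSubK}, through Lemmas~\ref{lem:PfOfProdTrans} and~\ref{lem:PfD}, holds only for matrices of even size, so the argument needs $k+1$ even, which is exactly the case ($k$ odd) where the paper applies this lemma.
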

\begin{proof}Indeed, since $c_k < n,$ then $(c_k + 1) \le n$ and by Lemma~\ref{lem:PfSubK} applied to $1 \le 1 < (c_1 + 1) < \ldots < (c_k) \le n$ we obtain that
\begin{multline*}
\D^{(n)}[1, (c_1 + 1),\ldots, (c_k + 1)|1, (c_1 + 1),\ldots, (c_k + 1)] \overset{\eqref{lem:PfSubK:eqq1}}{=\joinrel=} (-1)^{1 + (c_1+1) + \ldots + (c_k+1)}\\ = (-1)^{c_1 + \ldots + c_k + (k+1)}.
\end{multline*}

\end{proof}

\begin{proof}[Proof of Theorem~\ref{thm:DetEqPfAll}]First observe that
\begin{multline}\label{thm:DetEqPfAll:eq1}
\det_{n\,k}(X) \overset{\mbox{\scriptsize Definition~\ref{def:CullisDet}}}{=\joinrel=\joinrel=\joinrel=\joinrel=\joinrel=\joinrel=} (-1)^{(1 + \ldots + k)}\sum_{1 \le c_1 < \ldots < c_k \le n } (-1)^{c_1 + \ldots + c_k} \det X[c_1,\ldots, c_k|)\\
\overset{X \leftrightarrow X^t}{=\joinrel=\joinrel=\joinrel=} (-1)^{(1 + \ldots + k)} \sum_{1 \le c_1 < \ldots < c_k \le n } (-1)^{c_1 + \ldots + c_k} \det (X^t(|c_1,\ldots, c_k]).
\end{multline}

Now let us consider two cases separately: $k$ is even and $k$ is odd. 

\paragraph{$k$ is even} Using Lemma~\ref{lem:PfSubK} for each $1 < c_1 < \ldots < c_k \le n$ we obtain that
\begin{multline}
\det_{n\,k}(X) 
\overset{\eqref{thm:DetEqPfAll:eq1}}{=\joinrel=\joinrel=\joinrel=} (-1)^{(1 + \ldots + k)} \sum_{1 \le c_1 < \ldots < c_k \le n } (-1)^{c_1 + \ldots + c_k}\det (X^t(|c_1,\ldots, c_k])\\
\overset{\eqref{lem:PfSubK:eqq1}}{=\joinrel=} (-1)^{(1 + \ldots + k)}\\
\times  \sum_{1 \le c_1 < \ldots < c_k \le n } (-1)^{c_1 + \ldots + c_k}\pf(\D^{n}[c_1,\ldots, c_k|c_1,\ldots, c_k]) \det X^t(|c_1,\ldots, c_k])\\
\overset{\mbox{\scriptsize Lemma~\ref{thm:OkadaPf}\ref{thm:OkadaPf:part1}}}{=\joinrel=\joinrel=\joinrel=\joinrel=\joinrel=\joinrel=\joinrel=}  (-1)^{(1 + \ldots + k)}\pf \left(X^t \D^{(n)} (X^t)^t\right) = (-1)^{(1 + \ldots + k)}\pf \left(X^t \D^{(n)} X\right). 
\end{multline}

\paragraph{$k$ is odd} Recall that the statement of Theorem~\ref{thm:DetEqPfAll} for this case involves the matrix  $X' \in \M_{(n+1)\, (k+1)}(\F)$  defined by  
 $$X' =  \left(
\begin{array}{c|ccc}
\multicolumn{1}{c}{1} & 0 & \cdots & 0 \\
\cline{2-4}
0      & &   & \\
\vdots & & X & \\
0      & &   & 
\end{array}
\right).$$
We have the following expansion for $\det_{n\,k}(X)$
\begin{multline}\label{thm:DetEqPfAll:eq3}
\det_{n\,k}(X) 
\overset{\eqref{thm:DetEqPfAll:eq1}}{=\joinrel=\joinrel=\joinrel=} (-1)^{(1 + \ldots + k)} \sum_{1 \le c_1 < \ldots < c_k \le n } (-1)^{c_1 + \ldots + c_k} \det (X^t(|c_1,\ldots, c_k])\\
= (-1)^{(1 + \ldots + k + (k+1))}\sum_{1 \le c_1 < \ldots < c_k \le n} (-1)^{c_1 + \ldots + c_k + (k+1)}  \det (X^t(|c_1,\ldots, c_k]).
\end{multline}
Then, using Lemma~\ref{lem:PfSubKP1} for each $1 < c_1 < \ldots < c_k \le n$ we obtain that
\begin{multline}\label{thm:DetEqPfAll:eq4}
\det_{n\,k}(X) 
\overset{\eqref{thm:DetEqPfAll:eq3}}{=\joinrel=\joinrel=\joinrel=} (-1)^{(1 + \ldots + k + (k+1))}\\
\times \sum_{1 \le c_1 < \ldots < c_k \le n} (-1)^{c_1 + \ldots + c_k + (k+1)} \det (X^t(|c_1,\ldots, c_k])\\
\overset{\mbox{\scriptsize \eqref{lem:PfSubKP1:eqq1} for each $1 \le c_1 < \ldots < c_k \le n$ }}{=\joinrel=\joinrel=\joinrel=\joinrel=\joinrel=\joinrel=\joinrel=\joinrel=\joinrel=\joinrel=\joinrel=\joinrel=\joinrel=\joinrel=\joinrel=\joinrel=\joinrel=} (-1)^{(1 + \ldots + k + (k+1))}\\
\times \sum_{1 \le c_1 < \ldots < c_k \le n } \bigg(\pf(\D^{(n+1)}[1, (c_1+1),\ldots, (c_k+1)|1, (c_1+1),\ldots, (c_k+1)])\\
\times  \det (X^t(|c_1,\ldots, c_k])\bigg)\\
\overset{\mbox{\scriptsize Lemma~\ref{thm:OkadaPf}\ref{thm:OkadaPf:part2}}}{=\joinrel=\joinrel=\joinrel=\joinrel=\joinrel=\joinrel=\joinrel=}  (-1)^{(1 + \ldots + k + (k+1))}\pf \left((X')^t \D^{(n+1)} ((X')^t)^t\right)\\
 = (-1)^{(1 + \ldots + k + (k+1))}\pf \left((X')^t \D^{(n+1)} X'\right). 
\end{multline}
\end{proof}

\begin{remark}
The preceding proofs and definitions are division-free as well as the definition of the Pfaffian. Accordingly, Theorem~\ref{thm:DetEqPfAll} is valid for every commutative ring.
\end{remark}

\begin{example} Let us show how the polynomial equality~\eqref{thm:DetEqPfAll:eqq1} could be obtained by the direct expansion of its left-hand side and the right-hand side for $(n, k) = (3,2)$.\linebreak  Let $X = \begin{pmatrix}
x_{1\,1} & x_{1\,2}\\
x_{2\,1} & x_{2\,2}\\
x_{3\,1} & x_{3\,2}
\end{pmatrix}.$ Then
\begin{multline*}
\det_{3\,2}(X) = \begin{vmatrix}
x_{1\,1} & x_{1\,2}\\
x_{2\,1} & x_{2\,2}\\
x_{3\,1} & x_{3\,2}
\end{vmatrix}_{3\,2} = (-1)^{(1 - 1) + (2 - 2)}\begin{vmatrix}x_{1\,1} & x_{1\,2}\\ x_{2\,1} & x_{2\,2}\end{vmatrix}\\
 + (-1)^{(1 - 1) + (3 - 2)}\begin{vmatrix}x_{1\,1} & x_{1\,2}\\ x_{3\,1} & x_{3\,2}\end{vmatrix}
 + (-1)^{(2 - 1) + (3 - 2)}\begin{vmatrix}x_{2\,1} & x_{2\,2}\\ x_{3\,1} & x_{3\,2}\end{vmatrix}\\
= (x_{1\,1}x_{2\,2} - x_{1\,2}x_{2\,1}) - (x_{1\,1}x_{3\,2} - x_{1\,2}x_{3\,1}) + (x_{2\,1}x_{3\,2} - x_{2\,2}x_{3\,1})\\
= x_{1\,1}x_{2\,2} - x_{1\,1}x_{3\,2}  - x_{1\,2}x_{2\,1} + x_{1\,2}x_{3\,1} + x_{2\,1}x_{3\,2} - x_{2\,2}x_{3\,1}
\end{multline*}
Before we expand $\pf(X^t\D^{(3)}X)$, let us find $Y = X^t\D^{(3)}X.$
\begin{multline*}
Y = X^t\D^{(3)}X = \begin{pmatrix}
x_{1\,1} & x_{2\,1} & x_{3\,1}\\
x_{1\,2} & x_{2\,2} & x_{3\,2}
\end{pmatrix} \begin{pmatrix}0 & 1 & -1\\ -1 & 0 & 1\\ 1 & -1 & 0\end{pmatrix} \begin{pmatrix}
x_{1\,1} & x_{1\,2}\\
x_{2\,1} & x_{2\,2}\\
x_{3\,1} & x_{3\,2}
\end{pmatrix}\\
= \begin{pmatrix}
-x_{2\,1} + x_{3\,1} & x_{1\,1} - x_{3\,1} & -x_{1\,1} + x_{2\,1}\\
-x_{2\,2} + x_{3\,2} & x_{1\,2} - x_{3\,2} & -x_{1\,2} + x_{2\,2}
\end{pmatrix}
 \begin{pmatrix}
x_{1\,1} & x_{1\,2}\\
x_{2\,1} & x_{2\,2}\\
x_{3\,1} & x_{3\,2}
\end{pmatrix}\\
= 
{\setlength{\arraycolsep}{1pt}\scriptsize
\begin{pmatrix}
(\SM x_{2\hspace{0.01em}1} {\scriptscriptstyle +} x_{3\hspace{0.01em}1})x_{1\hspace{0.01em}1} {\scriptscriptstyle +} (x_{1\hspace{0.01em}1} \SM x_{3\hspace{0.01em}1})x_{2\hspace{0.01em}1} {\scriptscriptstyle +} (\SM x_{1\hspace{0.01em}1} {\scriptscriptstyle +} x_{2\hspace{0.01em}1})x_{3\hspace{0.01em}1} & (\SM x_{2\hspace{0.01em}1} {\scriptscriptstyle +} x_{3\hspace{0.01em}1})x_{1\hspace{0.01em}2} {\scriptscriptstyle +} (x_{1\hspace{0.01em}1} \SM x_{3\hspace{0.01em}1})x_{2\hspace{0.01em}2} {\scriptscriptstyle +} (\SM x_{1\hspace{0.01em}1} {\scriptscriptstyle +} x_{2\hspace{0.01em}1})x_{3\hspace{0.01em}2}\\
(\SM x_{2\hspace{0.01em}2} {\scriptscriptstyle +} x_{3\hspace{0.01em}2})x_{1\hspace{0.01em}1} {\scriptscriptstyle +} (x_{1\hspace{0.01em}2} \SM x_{3\hspace{0.01em}2})x_{2\hspace{0.01em}1} {\scriptscriptstyle +} (\SM x_{1\hspace{0.01em}2} {\scriptscriptstyle +} x_{2\hspace{0.01em}2})x_{3\hspace{0.01em}1} & (\SM x_{2\hspace{0.01em}2} {\scriptscriptstyle +} x_{3\hspace{0.01em}2})x_{1\hspace{0.01em}2} {\scriptscriptstyle +} (x_{1\hspace{0.01em}2} \SM x_{3\hspace{0.01em}2})x_{2\hspace{0.01em}2} {\scriptscriptstyle +} (\SM x_{1\hspace{0.01em}1} {\scriptscriptstyle +} x_{2\hspace{0.01em}1})x_{3\hspace{0.01em}2}
\end{pmatrix}}\\
{\setlength{\arraycolsep}{1pt}\scriptsize
\begin{pmatrix}
0 & x_{1\hspace{0.01em}1}x_{2\hspace{0.01em}2} \SM  x_{1\hspace{0.01em}1}x_{3\hspace{0.01em}2}  \SM  x_{1\hspace{0.01em}2}x_{2\hspace{0.01em}1} {\scriptscriptstyle +} x_{1\hspace{0.01em}2}x_{3\hspace{0.01em}1} {\scriptscriptstyle +} x_{2\hspace{0.01em}1}x_{3\hspace{0.01em}2} \SM  x_{2\hspace{0.01em}2}x_{3\hspace{0.01em}1}\\
\SM (x_{1\hspace{0.01em}1}x_{2\hspace{0.01em}2} \SM  x_{1\hspace{0.01em}1}x_{3\hspace{0.01em}2}  \SM  x_{1\hspace{0.01em}2}x_{2\hspace{0.01em}1} {\scriptscriptstyle +} x_{1\hspace{0.01em}2}x_{3\hspace{0.01em}1} {\scriptscriptstyle +} x_{2\hspace{0.01em}1}x_{3\hspace{0.01em}2} \SM  x_{2\hspace{0.01em}2}x_{3\hspace{0.01em}1}) & 0
\end{pmatrix}}.
\end{multline*}
Since the Pfaffian of the matrix of the form $\begin{psmallmatrix}0 & a\\ \SM a & 0\end{psmallmatrix}$ is $a$, then
\begin{equation*}
\pf(X^t\D^{(3)}X) = \pf(Y) = x_{1\,1}x_{2\,2} - x_{1\,1}x_{3\,2}  - x_{1\,2}x_{2\,1} + x_{1\,2}x_{3\,1} + x_{2\,1}x_{3\,2} - x_{2\,2}x_{3\,1}.
\end{equation*}
Thus, since $(-1)^{1 + 2 + 3} = 1$, the polynomials $\det_{3\,2}(X)$  $(-1)^{1 + 2 + 3}\pf(X^t\D^{(3)}X)$ are equal, which agrees with the equality~\eqref{thm:DetEqPfAll:eqq1} from the statement of Theorem~\ref{thm:DetEqPfAll}.
\end{example}

\section{Algorithm for computing $\det_{n\,k}(X)$}
\label{sec:computation}

In this section we show that Theorem~\ref{thm:DetEqPfAll} could be used to develop an algorithm for finding the value of the Cullis' determinant that improves the definition-based algorithm. 

Let us define the procedure $\mathbf{CULLISDET}(n, k, X)$ which receives as input integers $n, k \ge 1$ and a matrix $X = (x_{i\,j}) \in \M_{n\,k}(\F)$ and returns $\det_{n\,k}(X)$. This procedure calls the subprocedure $\mathbf{PF}(2n, X)$ which implements a division-free algorithm for finding the value of the Pfaffian of a given $2n\times 2n$ skew-symmetric matrix $X$ described in~\cite{PRZEZDZIECKI2025106550}. 

\begin{algorithm}[H]
\caption{The pseudocode for $\mathbf{CULLISDET}$ procedure}
\label{alg:DETNKALG}
    \begin{algorithmic}
\Procedure{$\mathbf{CULLISDET}$}{$n, k, X$}
\If{$n < k$}
\State \Return \Call{$\mathbf{CULLISDET}$}{$k, n, X^t$}
\EndIf
\If{$k$ is even}
\State $Y \gets X^t\mathbf D^{(n)}X$
\State $P \gets \Call{$\mathbf{PF}$}{k, Y}$
\State \Return $(-1)^{1 + 2 + \ldots + k}\cdot P$
\Else
\State $X' \gets \left(
\begin{array}{c|ccc}
\multicolumn{1}{c}{1} & 0 & \cdots & 0 \\
\cline{2-4}
0      & &   & \\
\vdots & & X & \\
0      & &   & 
\end{array}
\right)$
\State $Y \gets (X')^t\mathbf D^{(n+1)}(X')$
\State $P \gets \Call{$\mathbf{PF}$}{k+1,Y}$
\State \Return $(-1)^{1 + 2 + \ldots + (k+1)}\cdot P$
\EndIf
\EndProcedure
    \end{algorithmic}
\end{algorithm}

In order to study an asymptotical complexity of the presented algorithm, let $C_P(n,k)$ denote a number of arithmetical operations required to perform the procedure\linebreak $\mathbf{CULLISDET}(n, k, X)$. If $A$ and $B$ denote correspondingly an arithmetical complexity of finding the matrix $Y$ and calling the procedure $\textbf{PF}$, then
\[
C_P(n,k) = A + B + O(1).
\]

We conclude from Algorithm~\ref{alg:DETNKALG} that 
$$A = O(n^2k) + O(nk^2).$$
As it is stated in~\cite{PRZEZDZIECKI2025106550}, the subprocedure $\mathbf{PF}(2n,X)$ defined above requires $O(nM(n))$ arithmetical operations, where $M(n)$ is the arithmetical complexity of matrix multiplication of $n\times n$ matrices. Let us assume for simplicity that we use the definition-based algorithm for this operation. Since this algorithm requires $n^2(n-1)$ arithmetical operations, then $M(n) = O(n^3)$. 
Thus,
$$B = O(k\cdot k^{3}) = O(k^{4})$$
and consequently
$$C_P(n,k) = O(n^2k) + O(nk^2) + O(k^{4}) + O(1).$$
%

Now, let $C_M(n,k)$ denote a number of arithmetical operations required to calculate $\det_{n\,k}(X)$ directly by finding all the maximal minors of $X$. Then 
\[
C_M(n,k) = \binom{n}{k} D(k),
\] 
 where $D(k)$ is the number of arithmetical operations required for finding the determinant of a $k\times k$ matrix.


Let us compare the complexity of these two algorithms assuming that $n \ge k$. We consider three cases: 1) $k = 3$; 2) $k=4$ and 3) $k$ grows linearly with $n$, namely $k = \lfloor \frac{n}{2} \rfloor$. For the last case we use the inequality
\begin{equation}\label{compalg:eq1}
\binom{n}{k} \ge \left(\frac{n}{k}\right)^k
\end{equation}
(see~\cite[formula (C.5)]{Cormen2022-kd}). For $k = \lfloor \frac{n}{2} \rfloor$ the inequality~\eqref{compalg:eq1} implies that
\[
\binom{n}{\lfloor \frac{n}{2} \rfloor} \ge \left(\frac{n}{\lfloor \frac{n}{2} \rfloor}\right)^{\lfloor \frac{n}{2} \rfloor} \ge 2^{\lfloor \frac{n}{2} \rfloor}.
\]
Therefore,
\[
\binom{n}{\lfloor \frac{n}{2} \rfloor} = \Omega \left(2^{\lfloor \frac{n}{2} \rfloor}\right) = \Omega\left(\sqrt{2}^n\right)
\]
and consequently
\[
C_M(n,\lfloor \frac{n}{2} \rfloor) = \Omega\left(\sqrt{2}^n\right) D\left(2^{\lfloor \frac{n}{2} \rfloor}\right) = \Omega\left(\sqrt{2}^n\right).
\]
\begin{table}
\caption{Asymptotical behaviour of $C_M(n,k)$ and $C_P(n,k)$}
\label{tab:algcomp}
\begin{center}
\renewcommand{\arraystretch}{2}
\begin{tabular}{||c||c|c|c|}
\hline
 & $k = 3$ & $k = 4$ & $k = \lfloor \frac{n}{2} \rfloor$\\
 \hline
 \hline
$C_M(n,k)$ & $\Omega (n^3)$ & $\Omega (n^4)$ & $\Omega\left(\sqrt{2}^n\right)$\\
\hline
$C_P(n,k)$  & $O(n^2)$ & $O(n^2)$ & $O(n^3)$ \\
\hline
\end{tabular}
\end{center}
\end{table}

The results presented in Table~\ref{tab:algcomp} demonsrate that Algorithm~\ref{alg:DETNKALG} has the better asymptotical complexity than the definition-based one, especially in the last case, where it has a polynomial complexity, while the complexity of the second algorithm is exponential.
\section{The horizontal analogue of the Laplace expansion}
\label{sec:horizexp}

In this section we prove Theorem~\ref{thm:HorExpEven} and Theorem~\ref{thm:HorExpOdd} providing the formulas for the horizontal expansion of $\det_{n\,k}$ for $k$ even and $k$ odd, correspondingly. The proofs rely on Theorem~\ref{thm:DetEqPfAll}, the expansion formula for the Pfaffian (Lemma~\ref{lem:PfLapExp}) and the property of the Cullis' determinant formulated in Lemma~\ref{lem:DetNKZeroFirst} below.

%
%
\begin{proof}[Proof of Theorem~\ref{thm:HorExpEven}]Indeed,
\begin{multline*}
\det_{n\,2m}(X) \overset{\mbox{\scriptsize Theorem~\ref{thm:DetEqPfAll}}}{=\joinrel=\joinrel=\joinrel=\joinrel=\joinrel=\joinrel=} (-1)^{1 + 2 + \ldots + 2m}\cdot \pf \left(X^t \D^{(n)} X\right)\\
 \overset{\mbox{\scriptsize Lemma~\ref{lem:PfLapExp}}}{=\joinrel=\joinrel=\joinrel=\joinrel=\joinrel=} (-1)^{1 + 2 + \ldots + 2m}\left(\sum_{1 \le i < j} (-1)^{i + j - 1} \left(X^t \D^{(n)} X\right)_{i\,j}\pf\left(\left(X^t \D^{(n)} X\right)(i, j | i, j)\right)\right.\\
\phantom{XXXXXXXXXXX} \left. + \sum_{j < i \le 2m} (-1)^{i + j} \left(X^t \D^{(n)} X\right)_{i\,j}\pf\left(\left(X^t \D^{(n)} X\right)(i, j | i, j)\right)\right)\\
\overset{\mbox{\scriptsize Theorem~\ref{thm:DetEqPfAll}}}{=\joinrel=\joinrel=\joinrel=\joinrel=\joinrel=\joinrel=}  (-1)^{1 + 2 + \ldots + 2m}
 \Bigg(\sum_{1 \le i < j} \bigg((-1)^{i + j - 1} \cdot (-1)^{1 + 2} \cdot  (-1)^{1 + 2 + \ldots + (2m- 2)}\\
\phantom{XXXXXXXXXXXXXXXX}\times \det_{n\,2}\bigl(X(|i,j]\bigr) \cdot \det_{n\,(2m-2)}\bigl(X(|i,j)\bigr)\bigg)\\
\phantom{XXXXXXXXXXX} + \sum_{j < i \le 2m}\bigg( (-1)^{i + j}\cdot (-1)^{1 + 2} \cdot  (-1)^{1 + 2 + \ldots + (2m - 2)} \\
\phantom{XXXXXXXXXXXXXXXX} \times \bigl(-\det_{n\,2}\bigl(X(|i,j]\bigr)\bigr) \cdot \det_{n\,(2m-2)}\bigl(X(|i,j)\bigr)\bigg)\Bigg)\\
\overset{{\parbox{2.3cm}{\scriptsize the total exponent of $(-1)$ is even}}}{=\joinrel=\joinrel=\joinrel=\joinrel=\joinrel=\joinrel=\joinrel=\joinrel=\joinrel=\joinrel=\joinrel=} \sum_{1 \le i < j} (-1)^{i + j - 1} \cdot  \det_{n\,2}\bigl(X(|i,j]\bigr) \cdot  \det_{n\,(2m-2)}\bigl(X(|i,j)\bigr)\\
\phantom{XXXXXXXXXXX} + \sum_{j < i \le 2m} (-1)^{i + j} \cdot  \biggl(-\det_{n\,2}\bigl(X(|i,j]\bigr)\biggr)\cdot \det_{n\,(2m-2)}\bigl(X(|i,j)\bigr)\\
 = \sum_{1 \le i \neq j \le 2m} (-1)^{i + j - 1} \cdot  \det_{n\,2}\bigl(X(|i,j]\bigr)\cdot  \det_{n\,(2m-2)}\bigl(X(|i,j)\bigr)
\end{multline*}
\end{proof}

\begin{lemma}\label{lem:DetNKZeroFirst}Let $n \ge k$ be positive integers such that $k$ is even. Then 
\begin{equation}\label{lem:DetNKZeroFirst:eqq1}
\begin{vmatrix}
0 & \cdots & 0\\
x_{1\,1} & \cdots & x_{1\,k}\\
\vdots & \ddots & \vdots\\
x_{n\,1} & \cdots & x_{n\,k}
\end{vmatrix}_{(n+1)\,k} =
\begin{vmatrix}
x_{1\,1} & \cdots & x_{1\,k}\\
\vdots & \ddots & \vdots\\
x_{n\,1} & \cdots & x_{n\,k}
\end{vmatrix}_{n\,k}\;\;\mbox{for all}\;\;X = (x_{i\,j}) \in \M_{n\,k}(\F).
\end{equation}
\end{lemma}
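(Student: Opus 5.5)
Let $\widetilde X\in\M_{(n+1)\,k}(\F)$ be the matrix obtained from $X$ by adjoining a zero row on top. The plan is to compare the defining alternating sums of maximal minors directly, using Definition~\ref{def:CullisDet}; no Pfaffian machinery is needed.

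First, every $k\times k$ submatrix of $\widetilde X$ that uses the first row has a zero row, so its determinant vanishes. Hence only row selections $2\le c_1<\ldots<c_k\le n+1$ contribute to $\det_{(n+1)\,k}(\widetilde X)$.

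Next, reindex these by $c_i=d_i+1$ with $1\le d_1<\ldots<d_k\le n$. The minor $\widetilde X[c_1,\ldots,c_k|)$ is exactly $X[d_1,\ldots,d_k|)$. The sign changes as
\[
(-1)^{c_1+\ldots+c_k}=(-1)^{d_1+\ldots+d_k+k}=(-1)^{d_1+\ldots+d_k},
\]
and the last equality is where the evenness of $k$ enters.

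The prefactor $(-1)^{1+\ldots+k}$ is the same on both sides, since the number of columns does not change. So the two alternating sums agree term by term, which gives~\eqref{lem:DetNKZeroFirst:eqq1}.

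The only step that needs any care is the parity bookkeeping in the index shift. It is also worth noting why the hypothesis on $k$ is needed: for odd $k$ the same computation shows the two sides differ by a factor of $-1$.
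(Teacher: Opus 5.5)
Your proposal is correct and follows the paper's own argument: drop the row selections that contain the zero first row, reindex via $c_i=d_i+1$, and use the evenness of $k$ to get $(-1)^{k}=1$, so the signs $(-1)^{c_1+\ldots+c_k}$ are unchanged. Your remark that the two sides differ by a factor of $-1$ when $k$ is odd is also accurate.
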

\begin{proof}Let $Y = (y_{i\,j})$ denote the matrix on the left-hand side of~\eqref{lem:DetNKZeroFirst:eqq1} and $X = (x_{i\,j})$ denote the matrix on the right-hand side of~\eqref{lem:DetNKZeroFirst:eqq1}.
Then, by the definition of the Cullis' determinant we conclude that
\begin{multline*}
\begin{vmatrix}
0 & \cdots & 0\\
x_{1\,1} & \cdots & x_{1\,k}\\
\vdots & \ddots & \vdots\\
x_{n\,1} & \cdots & x_{n\,k}
\end{vmatrix}_{(n+1)\,k}\\
 = (-1)^{(1 + \ldots + k)}\sum_{1 \le c_1 < \ldots < c_k \le (n+1) } (-1)^{c_1 + \ldots + c_k} \begin{vmatrix}x'_{c_1\, 1} & \cdots & y_{c_1\, k}\\ \vdots & \ddots & \vdots\\ y_{c_k\, 1} & \cdots & y_{c_k\, k}\end{vmatrix}\\
\overset{\scriptsize y_{1\,j} = 0\;\;\mbox{\scriptsize  for all}\;\;1 \le j \le k}{=\joinrel=\joinrel=\joinrel=\joinrel=\joinrel=\joinrel=\joinrel=\joinrel=\joinrel=\joinrel=\joinrel=} 
(-1)^{(1 + \ldots + k)}\sum_{2 \le c_1 < \ldots < c_k \le (n+1) } (-1)^{c_1 + \ldots + c_k} \begin{vmatrix}y_{c_1\, 1} & \cdots & y_{c_1\, k}\\ \vdots & \ddots & \vdots\\ y_{c_k\, 1} & \cdots & y_{c_k\, k}\end{vmatrix}\\
(-1)^{(1 + \ldots + k)}\sum_{1 \le c'_1 < \ldots < c'_k \le n } (-1)^{(c'_1 + 1) + \ldots + (c'_k+1)} \begin{vmatrix}y_{(c'_1 + 1)\, 1} & \cdots & y_{(c'_1+1)\, k}\\ \vdots & \ddots & \vdots\\ y_{(c'_k+1)\, 1} & \cdots & y_{(c'_k+1)\, k}\end{vmatrix}\\
\overset{\parbox{3.2cm}{\scriptsize $k$ is even, $y_{i+1\,j} = x_{i\,j}$ for all $1 \le i \le n,\, 1 \le j \le k$}}{=\joinrel=\joinrel=\joinrel=\joinrel=\joinrel=\joinrel=\joinrel=\joinrel=\joinrel=\joinrel=\joinrel=\joinrel=\joinrel=\joinrel=\joinrel=}
(-1)^{(1 + \ldots + k)}\sum_{1 \le c'_1 < \ldots < c'_k \le n} (-1)^{c'_1 + \ldots + c'_k} \begin{vmatrix}x_{c'_1\,1} & \cdots & x_{c'_1\, k}\\ \vdots & \ddots & \vdots\\ x_{c'_k\, 1} & \cdots & x_{c'_k\, k}\end{vmatrix}\\
= \det_{n\,k}(X).
\end{multline*}
\end{proof}

%

\begin{proof}[Proof of Theorem~\ref{thm:HorExpOdd}] Let $X \in \M_{n\, (2m-1)}(\F)$. Consider $X' \in \M_{(n+1)\, (2m)}(\F)$  defined by  
 $$X' =  \left(
\begin{array}{c|ccc}
\multicolumn{1}{c}{1} & 0 & \cdots & 0 \\
\cline{2-4}
0      & &   & \\
\vdots & & X & \\
0      & &   & 
\end{array}
\right).$$
For all  $1 \le i \le 2m - 1$ we have
\begin{equation}\label{thm:HorExpOdd:eq1}
\det_{(n+1)\,2}\bigl(X'(|1,i+1]\bigr) = 
\begin{vmatrix}
1 & 0\\
0 & x_{1\, i}\\
\vdots & \vdots\\
0 & x_{n\, i}
\end{vmatrix}_{(n+1)\,2} \overset{\parbox{3.5cm}{\scriptsize Laplace expansion of $X'(|1,i+1]$ along the first column}}{=\joinrel=\joinrel=\joinrel=\joinrel=\joinrel=\joinrel=\joinrel=\joinrel=\joinrel=\joinrel=\joinrel=\joinrel=\joinrel=\joinrel=\joinrel=} \det_{n\,1} \bigl(X(|i]\bigr) 
\end{equation}
and
\begin{multline}\label{thm:HorExpOdd:eq2}
\det_{(n+1)\,(2m-2)}\bigl(X'(|1, i+1)\bigr)\\
 = \begin{vmatrix}
0 & \cdots & 0 & 0 & \cdots & 0 \\
x_{1\, 1} & \cdots & x_{1\, (i-1)} & x_{1\, (i+1)} & \cdots & x_{1\, (2m-1)}\\
\vdots & \ddots & \vdots & \vdots & \ddots & \vdots\\
x_{n\, 1} & \cdots & x_{n\, (i-1)} & x_{n\, (i+1)} & \cdots & x_{n\, (2m-1)}\\
\end{vmatrix}_{(n+1)\,(2m-2)}\\
\overset{\mbox{\scriptsize Lemma~\ref{lem:DetNKZeroFirst}}}{=\joinrel=\joinrel=\joinrel=\joinrel=\joinrel=} \det_{n\,(2m-2)}\bigl(X(|i)\bigr).
\end{multline}

Hence,
\begin{multline*}
\det_{n\,(2m-1)}(X) \overset{\parbox{3.5cm}{\scriptsize Laplace expansion of $X'$ along the first column}}{=\joinrel=\joinrel=\joinrel=\joinrel=\joinrel=\joinrel=\joinrel=\joinrel=\joinrel=\joinrel=\joinrel=\joinrel=\joinrel=} \det_{(n+1)\,(2m)}(X')\\
\overset{\parbox{2.6cm}{\scriptsize Theorem~\ref{thm:HorExpEven} for $X'$ and $j = 1$}}{=\joinrel=\joinrel=\joinrel=\joinrel=\joinrel=\joinrel=\joinrel=\joinrel=\joinrel=\joinrel=\joinrel=\joinrel=} \sum_{1 < i \le 2m} (-1)^{i + 1 - 1} \det_{(n+1)\,2}\bigl(X'(|1,i]\bigr)\det_{(n+1)\,(2m-2)}\bigl(X'(|1, i)\bigr)\\
\overset{\parbox{1.6cm}{\scriptsize Replacement $i = i' + 1$}}{=\joinrel=\joinrel=\joinrel=\joinrel=\joinrel=\joinrel=\joinrel=} \sum_{1 \le i' \le 2m-1} (-1)^{i' - 1} \det_{(n+1)\,2}\bigl(X'(|1,i'+1]\bigr)\det_{(n+1)\,(2m-2)}\bigl(X'(|1,i'+1)\bigr)\\
\overset{\mbox{\scriptsize \eqref{thm:HorExpOdd:eq1} and~\eqref{thm:HorExpOdd:eq2}}}{=\joinrel=\joinrel=\joinrel=\joinrel=\joinrel=\joinrel=\joinrel=\joinrel=\joinrel=} 
\sum_{1 \le i' \le 2m-1} (-1)^{i' - 1} \det_{n\,1}\bigl(X(|i']\bigr)\det_{n\,(2m-2)}\bigl(X(|i')\bigr).
\end{multline*}
\end{proof}

Using Theorem~\ref{thm:HorExpOdd}, it is possible to provide the following direct proof of the sufficiency part of Theorem 7.12 from~\cite{Guterman2025l}.

\begin{corollary}\label{cor:ZeroRowSumDetZero}Assume that $n \ge k \ge 1$ are integers and $k$ is odd. If $X \in \M_{n\,k}$ is such that the alternating sum of its rows is equal to zero, then $\det_{n\,k}(X) = 0$.
\end{corollary}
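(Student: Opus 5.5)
The plan is to argue by the horizontal expansion of Theorem~\ref{thm:HorExpOdd}, reducing the claim to the case $k=1$, which is immediate from the definition. Write $k = 2m-1$. Theorem~\ref{thm:HorExpOdd} gives
\[
\det_{n\,k}(X) = \sum_{1 \le i \le 2m-1} (-1)^{i-1} \det_{n\,1}\bigl(X(|i]\bigr)\det_{n\,(2m-2)}\bigl(X(|i)\bigr).
\]
So it suffices to show that every factor $\det_{n\,1}\bigl(X(|i]\bigr)$ vanishes.

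First compute $\det_{n\,1}$ of a column directly from Definition~\ref{def:CullisDet}. For $k=1$ the minors are the single entries, so
\[
\det_{n\,1}(\mathbf v) = (-1)^{1}\sum_{c=1}^{n} (-1)^{c} \mathbf v_c = \sum_{c=1}^n (-1)^{c-1}\mathbf v_c .
\]
This is, up to sign, the alternating sum of the coordinates of $\mathbf v$.

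Next observe that the hypothesis "the alternating sum of the rows of $X$ is zero" says exactly that $\sum_{c=1}^n (-1)^{c-1} x_{c\,i} = 0$ for every column index $i$. Hence $\det_{n\,1}\bigl(X(|i]\bigr)=0$ for all $1\le i\le 2m-1$. Every summand in the expansion then vanishes, so $\det_{n\,k}(X)=0$.

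The case $k=1$ (that is, $m=1$) should be checked separately. There $X(|i)$ is an $n\times 0$ matrix, so it is cleaner to apply the $k=1$ computation directly: $\det_{n\,1}(X)$ is the alternating sum itself, which is zero. For $k\ge 3$ the condition $n\ge k$ ensures $n \ge 2m-1$, so Theorem~\ref{thm:HorExpOdd} applies and $X(|i)$ has $n \ge 2m-2$ rows.

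There is no real obstacle here. The only points needing care are the sign convention in $\det_{n\,1}$ and the degenerate case $m=1$; neither affects vanishing.
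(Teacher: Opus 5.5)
Your proof is correct and matches the paper's argument: apply Theorem~\ref{thm:HorExpOdd} and observe that each factor $\det_{n\,1}\bigl(X(|i]\bigr)$ is, up to sign, the $i$-th coordinate of the alternating row sum, so it vanishes. Treating $k=1$ separately is a sensible extra precaution, since for $m=1$ the paper's expansion would involve $\det_{n\,0}$, which the paper never defines.
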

\begin{proof}Indeed, since the alternating sum of the rows of $X$ is equal to zero, then
\begin{equation}\label{cor:ZeroRowSumDetZero:eq1}
\det_{n\,1}(X(|i]) = 0\;\;\mbox{for all}\;\;1 \le i \le k.
\end{equation}
Hence,
\begin{multline*}
\det_{n\,k}(X)
 \overset{\mbox{\scriptsize Theorem~\ref{thm:HorExpOdd}}}{=\joinrel=\joinrel=\joinrel=\joinrel=\joinrel=\joinrel=\joinrel=\joinrel=} \sum_{1 \le i \le k} (-1)^{i - 1} \det_{n\,1}\left(X(|i]\right)\det_{n\,(k-1)}\left(X(| i)\right)\\
  \overset{\eqref{cor:ZeroRowSumDetZero:eq1}}{=\joinrel=\joinrel=\joinrel=} \sum_{1 \le i \le k} (-1)^{i - 1} 0\cdot \det_{n\,(k-1)}\left(X(| i)\right) = 0.
\end{multline*}
\end{proof}

\subsection*{Acknowledgements} 

We thank Soichi Okada for bringing to our attention that Theorem~\ref{thm:DetEqPfAll} is a consequence of the Ishikawa-Wakayama minor summation formula.

\subsection*{Funding information}

The research of the second author was supported by the scholarship of the Center for Absorption in Science, the Ministry for Absorption of Aliyah, the State of Israel.

\subsection*{Author contributions}

All authors have accepted responsibility for the entire content of this manuscript and
consented to its submission to the journal, reviewed all the results, and approved the final version of the
manuscript.

\subsection*{Conflict of interest}

The authors declare no conflicts of interest.

\subsection*{Data availability statement}

Not applicable.

\bibliographystyle{siamplain}
\bibliography{cullisdetpfaffian}

\appendix
\section*{Appendix}

The following three sections are devoted to an alternative elementary proof of Theorem~\ref{thm:DetEqPfAll}. We begin with the case where the $n\times k$ matrix $X$ has even number of columns, i.e., $k$ is even. In this case, we establish that the coefficients of the monomials of the form $x_{i_1\, 1} \cdot \ldots \cdot x_{i_k\, k}$ in the expansions of $\det_{n\,k}(X)$ and $\pf \left(X^t \D^{(n)} X\right)$ are equal for all $1 \le i_1, \ldots, i_k \le n$. For this, we prove that $\sgn(c_{1}, \ldots, c_{2m})$ is expressed as the Pfaffian of the $2m \times 2m$ matrix $\SGN(c_1, \ldots, c_n) = (s_{i\,j})$, where $s_{i\,j} = \sgn (c_{i}, c_{j})$ for all $1 \le i, j \le 2m$ (Definition~\ref{def:SGNMat} and Lemma~\ref{lem:PfOfSGNIsSgn}). The remaining case in the equality~\eqref{thm:DetEqPfAll:eqq1} is carried out by applying the properties of the Cullis' determinant.

\section{Additional notation and supplementary facts}
\label{sec:addnot}
We use the additional notation and supplementary facts which are introduced following~\cite{NAKAGAMI2007422}.

\begin{definition}
Let $X$ be a set and $k \ge 1$ be an integer. By $\mathcal C_{X}^{k}$ we denote the set of all injections from $[k]$ to $X$.
\end{definition}

%
%

\begin{definition}\label{def:NAKAGSgn}Given an injection $\sigma \in \mathcal C_{[n]}^{k}$ we denote by $\sgn_{n\,k} (\sigma)$ a number defined by
\[
\sgn_{n\,k} (\sigma) = (-1)^{\sum_{\alpha = 1}^{k} (\sigma(\alpha) - \alpha)} \cdot \sgn (\sigma(1),\ldots, \sigma(k)).
\]
\end{definition}

\begin{lemma}[Cf.~{\cite[Theorem 13]{NAKAGAMI2007422}}]\label{lem:CullisDetSgnDef}If  $n \ge k$ and $X = (x_{i\,j}) \in \M_{n\,k} (\F)$, then 
$$
\det_{n\,k} (X) = \sum_{\sigma \in \mathcal  C_{[n]}^{k}} \sgn_{n\,k} (\sigma) x_{\sigma(1)\,1}x_{\sigma(2)\,2}\ldots x_{\sigma(k)\, k}.
$$
\end{lemma}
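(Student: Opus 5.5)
Starting from Definition~\ref{def:CullisDet}, I will expand every maximal minor by the Leibniz formula and then regroup the terms by injections $\sigma\in\mathcal C^k_{[n]}$. Take $1\le c_1<\ldots<c_k\le n$ and write $X[c_1,\ldots,c_k|)$ as a sum over $\tau\in S_k$:
\[
\det X[c_1,\ldots,c_k|) = \sum_{\tau\in S_k}\sgn(\tau)\, x_{c_{\tau(1)}\,1}\cdots x_{c_{\tau(k)}\,k}.
\]
Setting $\sigma(\alpha)=c_{\tau(\alpha)}$ gives an injection $[k]\to[n]$. The correspondence $(\{c_1<\ldots<c_k\},\tau)\mapsto\sigma$ is a bijection onto $\mathcal C^k_{[n]}$, since the increasing sequence $c$ is recovered as the sorted image of $\sigma$ and $\tau$ as the permutation that sorts it. This lets me write $\det_{n\,k}(X)$ as a single sum over $\sigma\in\mathcal C^k_{[n]}$ of $x_{\sigma(1)\,1}\cdots x_{\sigma(k)\,k}$ times a coefficient.

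The second step is to identify that coefficient with $\sgn_{n\,k}(\sigma)$. From the definition it equals $(-1)^{1+\ldots+k}(-1)^{c_1+\ldots+c_k}\sgn(\tau)$. Because the $c_i$ are exactly the values $\sigma(\alpha)$ rearranged,
\[
(-1)^{1+\ldots+k}(-1)^{c_1+\ldots+c_k}=(-1)^{\sum_{\alpha}(\sigma(\alpha)-\alpha)}.
\]
I then need $\sgn(\tau)=\sgn(\sigma(1),\ldots,\sigma(k))$ in the sense of Definition~\ref{def:SignOfSec}. This holds because $c$ is strictly increasing: for $\alpha<\beta$ we have $\sigma(\alpha)>\sigma(\beta)$ exactly when $\tau(\alpha)>\tau(\beta)$, so both tuples have the same set of inversions. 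The entries of $\sigma$ are distinct, so the zero case of Definition~\ref{def:SignOfSec} never occurs.

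There is no substantial obstacle. The only points that need care are checking that the regrouping map is a genuine bijection and that the inversion count is preserved. Everything else is finite reindexing, so the argument works over any commutative ring $\F$.
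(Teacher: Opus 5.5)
Your argument is correct. The paper does not prove this lemma itself; it imports it from Nakagami and Yanai~\cite[Theorem 13]{NAKAGAMI2007422}, so there is no internal route to compare against. You supply the standard direct derivation from Definition~\ref{def:CullisDet}:
\begin{itemize}
\item Leibniz-expand each maximal minor.
\item Regroup the terms via the bijection $(\{c_1<\ldots<c_k\},\tau)\mapsto\sigma$, $\sigma(\alpha)=c_{\tau(\alpha)}$, onto $\mathcal C^k_{[n]}$.
\item Use that $c$ is strictly increasing to transfer the sign data. The equality $\sum_\alpha\sigma(\alpha)=\sum_i c_i$ gives the parity factor, and the inversion set of $\tau$ coincides with that of $(\sigma(1),\ldots,\sigma(k))$.
\end{itemize}
What this buys is self-containedness. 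The appendix's ``elementary'' proof of Theorem~\ref{thm:DetEqPfAll} rests on exactly this coefficient formula, and your argument makes it depend only on Definition~\ref{def:CullisDet} rather than on an external reference.

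One cosmetic point concerns $k=1$. Definition~\ref{def:SignOfSec} is stated only for tuples of length at least $2$, so you should say explicitly that $\sgn(\sigma(1))=1$, because the inversion set is empty. This is what your inversion-count argument yields. It also makes $\sgn_{n\,1}(\sigma)=(-1)^{\sigma(1)-1}$ agree with $\det_{n\,1}(X)=\sum_{c}(-1)^{c-1}x_{c\,1}$.
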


\section{The sign of a tuple of integers and its properties}
\label{sec:sign}

In Section~\ref{sec:intro} we defined the notion of the sign of a tuple (see Definition~\ref{def:SignOfSec}). Here we provide and prove its necessary properties.

\begin{definition}\label{def:SGNMat}Let $(c_1, \ldots, c_n) \in \N^{n}$ be a tuple of natural numbers. Then by $\SGN(c_1, \ldots, c_n) \in \M_{n\, n}(\F)$ we denote a matrix defined by 
\[
\SGN(c_1, \ldots, c_n) = \begin{pmatrix}\sgn (c_1,c_1) & \cdots &  \sgn (c_1, c_n)\\ \vdots & \ddots & \vdots\\ \sgn (c_n, c_1) & \cdots & \sgn (c_n, c_n)\end{pmatrix}
\] 
\end{definition}

\begin{definition}\label{def:PermMat}
Let $\pi \in S_n$ be a permutation. Then by $\mathbf P^{\pi} \in \M_{n\,n}(\F)$ we denote the permutation matrix corresponding to $\pi$ which is defined by
 $\mathbf P^{\pi} = \sum_{i = 1}^n E_{i\, (\pi i)}$. 
 
The definition of the determinant of a square matrix implies that\linebreak $\det(\mathbf P^{\pi}) = \sgn(\pi(1), \ldots, \pi(n))$.
\end{definition}

\begin{lemma}\label{lem:PermMatAct}Let $\pi \in S_{n}$ be a permutation and $A = (a_{i\,j})\in \M_{n\, n}(\F)$ be a matrix. Then
\begin{equation}\label{lem:PermMatAct:eqq1}
\mathbf P^{\pi} A (\mathbf P^{\pi})^t = \begin{pmatrix}a_{\pi(1)\, \pi(1)} & \cdots & a_{\pi(1)\, \pi(n)}\\ \vdots & \ddots & \vdots\\ a_{\pi(n)\, \pi(1)} & \cdots & a_{\pi(n)\, \pi(n)} \end{pmatrix}.
\end{equation}
\end{lemma}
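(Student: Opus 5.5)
The plan is to verify the equality~\eqref{lem:PermMatAct:eqq1} entrywise, by writing out the matrix product $\mathbf P^{\pi} A (\mathbf P^{\pi})^t$ with matrix units. By Definition~\ref{def:PermMat}, $\mathbf P^{\pi} = \sum_{i=1}^n E_{i\,\pi(i)}$, and therefore $(\mathbf P^{\pi})^t = \sum_{j=1}^n E_{\pi(j)\, j}$. We also write $A = \sum_{p,q} a_{p\,q} E_{p\,q}$. The only tool needed is the multiplication rule $E_{i\,j}E_{k\,l} = \delta_{j\,k} E_{i\,l}$.

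Expanding the triple product by bilinearity gives
\[
\mathbf P^{\pi} A (\mathbf P^{\pi})^t = \sum_{i,p,q,j} a_{p\,q}\, E_{i\,\pi(i)} E_{p\,q} E_{\pi(j)\, j}.
\]
The first multiplication forces $p = \pi(i)$, and the second forces $q = \pi(j)$. The sum therefore collapses to $\sum_{i,j} a_{\pi(i)\,\pi(j)} E_{i\,j}$, which is exactly the right-hand side of~\eqref{lem:PermMatAct:eqq1}.

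Alternatively, one can argue by rows and columns. Row $i$ of $\mathbf P^{\pi}$ is $\mathbf e_{\pi(i)}^t$, so $\mathbf P^{\pi}A$ has row $i$ equal to row $\pi(i)$ of $A$. Right multiplication by $(\mathbf P^{\pi})^t$ performs the same permutation on columns, so the $(i,j)$ entry becomes $a_{\pi(i)\,\pi(j)}$.

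I do not expect a genuine obstacle. The only point needing care is the transpose: one must check that $(\mathbf P^{\pi})^t$ carries the indices as $E_{\pi(j)\,j}$ and not as $E_{\pi^{-1}(j)\,j}$, so that both row and column indices are permuted by $\pi$ rather than one by $\pi$ and the other by $\pi^{-1}$.
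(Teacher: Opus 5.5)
Your proposal is correct and is essentially the paper's argument. The paper computes the $(k,l)$ entry as $\sum_{i,j} p^{\pi}_{k\,i}\, a_{i\,j}\, p^{\pi}_{l\,j}$ and collapses it using that row $i$ of $\mathbf P^{\pi}$ has its single $1$ in column $\pi(i)$; your matrix-unit expansion (and your row/column variant) is the same computation written in the basis $E_{i\,j}$.
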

\begin{proof}Indeed, if $1 \le k, l \le n$, then
\begin{equation}\label{lem:PermMatAct:eq1}
(\mathbf P^{\pi} A (\mathbf P^{\pi})^t)_{k\,l} = \sum_{1 \le i, j \le n} {p^\pi}_{k\, i} a_{i\, j} {(p^\pi)}^t_{j\, l} = \sum_{1 \le i, j \le n} p^\pi_{k\, i} a_{i\, j} p^\pi_{l\, j}.
\end{equation}
Since the only nonzero element of the $i$-th row of $\mathbf P^{\pi}$ lies in the $\pi(i)$-th column and is equal to $1$, 
\begin{equation}\label{lem:PermMatAct:eq2}
\sum_{1 \le i, j \le n} p^\pi_{k\, i} a_{i\, j} p^\pi_{l\, j} = p^\pi_{k\, \pi(k)}  a_{\pi(k)\, \sigma(l)} p^\pi_{l\, \pi(l)} = a_{\pi(k)\, \pi(l)}.
\end{equation}
By combining~\eqref{lem:PermMatAct:eq1} and~\eqref{lem:PermMatAct:eq2} we obtain that
\begin{equation}\label{lem:PermMatAct:eq3}
(\mathbf P^{\pi} A (\mathbf P^{\pi})^t)_{k\,l} = a_{\pi(k)\, \pi(l)}.
\end{equation}
Since the equality~\eqref{lem:PermMatAct:eq3} holds for all  $1 \le k, l \le n$, the equality~\eqref{lem:PermMatAct:eqq1} is established.
\end{proof}

\begin{corollary}\label{cor:PermMatOnSGNMat}Let $\pi \in S_{n}$ be a permutation, and $(c_1, \ldots, c_{n}) \in \N^{n}$ be a tuple of natural numbers. Then
\[
\mathbf P^{\pi} \cdot \SGN (c_1, \ldots, c_{n}) (\mathbf P^{\pi})^t = \SGN (c_{\pi(1)}, \ldots, c_{\pi(n)})
\]
\end{corollary}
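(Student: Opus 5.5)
This is a direct specialization of Lemma~\ref{lem:PermMatAct}: take $A = \SGN(c_1, \ldots, c_n)$, so that $a_{i\,j} = \sgn(c_i, c_j)$ for all $1 \le i, j \le n$ by Definition~\ref{def:SGNMat}. Lemma~\ref{lem:PermMatAct} then says that the $(k,l)$ entry of $\mathbf P^{\pi} A (\mathbf P^{\pi})^t$ is $a_{\pi(k)\,\pi(l)}$. Substituting the definition of $A$ gives
\[
\bigl(\mathbf P^{\pi}\, \SGN(c_1, \ldots, c_n)\, (\mathbf P^{\pi})^t\bigr)_{k\,l} = \sgn\bigl(c_{\pi(k)}, c_{\pi(l)}\bigr), \quad 1 \le k, l \le n.
\]

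Next, introduce the tuple $(d_1, \ldots, d_n)$ with $d_k = c_{\pi(k)}$. By Definition~\ref{def:SGNMat}, the $(k,l)$ entry of $\SGN(d_1, \ldots, d_n) = \SGN(c_{\pi(1)}, \ldots, c_{\pi(n)})$ is $\sgn(d_k, d_l) = \sgn(c_{\pi(k)}, c_{\pi(l)})$. This coincides with the entry computed above.

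Since the two matrices agree entrywise for every $1 \le k, l \le n$, they are equal. There is no real obstacle here. The only point to check is that the entries of $\SGN$ are integers $0, \pm 1$, viewed in $\F$ through the canonical map $\mathbb Z \to \F$, so Lemma~\ref{lem:PermMatAct} applies to this matrix over $\F$ without change.
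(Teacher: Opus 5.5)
Your proof is correct and is the intended argument: the paper gives no separate proof of this corollary, treating it as an immediate specialization of Lemma~\ref{lem:PermMatAct} to $A = \SGN(c_1,\ldots,c_n)$ with entrywise comparison, exactly as you do. Your remark that the entries $0,\pm 1$ are read in $\F$ via $\mathbb Z \to \F$ is a harmless extra check.
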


\begin{lemma}\label{lem:PermChangesSign}Let $(c_1, \ldots, c_{n}) \in \N^{n}$ be a tuple of natural numbers,  $\pi \in S_{n}$ be a permutation. Then
\[
\sgn (c_{\pi(1)}, \ldots,  c_{\pi(n)}) = \sgn(\pi(1), \ldots, \pi(n)) \cdot \sgn (c_{1}, \ldots, c_{n}).
\]
\end{lemma}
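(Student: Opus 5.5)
The plan is to treat separately the case where the tuple $(c_1,\ldots,c_n)$ has a repeated entry and the case where all entries are distinct. In the latter case the argument reduces to counting inversions, i.e., to the multiplicativity of the sign of permutations.

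\emph{Repeated entries.} Suppose $c_i = c_j$ for some $i\neq j$. Then the permuted tuple $(c_{\pi(1)},\ldots,c_{\pi(n)})$ also has a repeated entry, namely in positions $\pi^{-1}(i)$ and $\pi^{-1}(j)$. By Definition~\ref{def:SignOfSec} both sides of the claimed equality are therefore $0$. Since $\sgn(\pi)=\pm 1$, the identity holds in this case.

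\emph{Distinct entries.} Now assume all $c_i$ are distinct. Let $\tau\in S_n$ be the unique permutation such that $c_i<c_j \iff \tau(i)<\tau(j)$; this is the rank of $c_i$ among $c_1,\ldots,c_n$. The sign of a tuple depends only on which pairs are inverted, so $\sgn(c_1,\ldots,c_n)=\sgn(\tau)$. Likewise the tuple $(c_{\pi(1)},\ldots,c_{\pi(n)})$ has the same pattern of inversions as $(\tau(\pi(1)),\ldots,\tau(\pi(n)))$. Hence
\[
\sgn(c_{\pi(1)},\ldots,c_{\pi(n)})=\sgn(\tau\circ\pi).
\]
The claim therefore reduces to the classical identity $\sgn(\tau\circ\pi)=\sgn(\pi)\,\sgn(\tau)$ for the inversion-count sign.

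\emph{Multiplicativity.} If I want to avoid citing this identity, the cleanest self-contained route, and the one most in the style of the paper, is through determinants. By Definition~\ref{def:PermMat}, $\det(\mathbf P^{\sigma})=\sgn(\sigma)$. One also checks directly that $\mathbf P^{\pi}\mathbf P^{\tau}=\mathbf P^{\tau\circ\pi}$: the only nonzero entry in row $i$ of $\mathbf P^{\pi}\mathbf P^{\tau}$ is in column $\tau(\pi(i))$. Multiplicativity of the ordinary determinant then gives $\sgn(\tau\circ\pi)=\sgn(\pi)\sgn(\tau)$. Alternatively, one can count directly: a pair $(k,l)$ with $k<l$ is an inversion for $\tau\circ\pi$ exactly when precisely one of the following holds: $(\pi(k),\pi(l))$ is inverted by $\pi$, or the pair $\{\pi(k),\pi(l)\}$ is inverted by $\tau$. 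Taking parities finishes the argument.

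The only mildly delicate point is the bookkeeping of composition order, namely $\tau\circ\pi$ versus $\pi\circ\tau$, and the matching product $\mathbf P^{\pi}\mathbf P^{\tau}$ versus $\mathbf P^{\tau}\mathbf P^{\pi}$. The final identity is symmetric in the two signs, so either order yields the statement. There is no real obstacle.
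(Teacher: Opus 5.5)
Your proof is correct, but it takes a different route from the paper.

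\textbf{The paper's route.} The paper's proof is a one-liner. It checks the identity when $\pi$ is a transposition: swapping two positions of a tuple flips the parity of the inversion count, or else both sides vanish. It then extends to all $\pi\in S_n$ by writing $\pi$ as a product of $r$ transpositions. The needed fact $\sgn(\pi)=(-1)^r$ is itself the transposition case applied to the tuple $(1,\ldots,n)$. So that argument needs nothing beyond the swap lemma and the fact that transpositions generate $S_n$.

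\textbf{Your route.} You first standardize the tuple via the rank permutation $\tau$, so both sides become signs of genuine permutations. You then use the homomorphism property $\sgn(\tau\circ\pi)=\sgn(\tau)\sgn(\pi)$. Your explicit treatment of the repeated-entry case is fine; the paper folds it into ``clearly''.

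\textbf{Direct pair count.} Of your two justifications, this is the more self-contained one: it handles all $\pi$ at once, with no decomposition into generators. Two small fixes are needed.
\begin{enumerate}
\item The first condition should read $\pi(k)>\pi(l)$, i.e., $(k,l)$ is an inversion of $\pi$, not ``$(\pi(k),\pi(l))$ is inverted by $\pi$''.
\item You should state that $(k,l)\mapsto\{\pi(k),\pi(l)\}$ is a bijection from pairs $k<l$ onto $2$-element subsets of $[n]$. This is what makes the second indicator sum to the inversion number of $\tau$.
\end{enumerate}

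\textbf{Determinant route.} This is acceptable as a citation, but it is not more elementary than the paper's argument. The standard proofs of $\det(AB)=\det(A)\det(B)$ already rely on how the inversion sign behaves under transpositions. Also, run this argument with integer permutation matrices rather than in $\M_{n\,n}(\F)$. If $\F$ has characteristic $2$, an equality $\det(\mathbf P^{\sigma})=\pm 1$ in $\F$ cannot recover the integer-valued sign.
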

\begin{proof}Indeed, this equality clearly holds in the case if $\pi$ is a transposition and consequently holds for all $\pi \in S_{n}$.
\end{proof}

\begin{lemma}\label{lem:PfOfSGNIsSgn}Let$(c_1, \ldots, c_{2m}) \in \N^{2m}$ be a tuple of natural numbers. Then
\begin{equation}\label{lem:PfOfSGNIsSgn:eqq1}
\pf \left(\SGN(c_1, \ldots, c_{2m})\right) = \sgn ( c_1, \ldots, c_{2m}).
\end{equation}
\end{lemma}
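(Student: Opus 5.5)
I will split into two cases according to whether the entries $c_1,\ldots,c_{2m}$ are pairwise distinct.

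\emph{Repeated entries.} Suppose $c_p = c_q$ for some $p \neq q$. Then the right-hand side of~\eqref{lem:PfOfSGNIsSgn:eqq1} is $0$ by Definition~\ref{def:SignOfSec}. For the left-hand side, every $l$ satisfies $\sgn(c_p,c_l)=\sgn(c_q,c_l)$, since both are determined by comparing the same number with $c_l$. Hence rows $p$ and $q$ of $\SGN(c_1,\ldots,c_{2m})$ coincide, and by skew-symmetry columns $p$ and $q$ coincide too. I do not want to rely on $\pf^2=\det$, because that would fail over rings with zero divisors. Instead I will use Corollary~\ref{cor:PermMatOnSGNMat} together with Lemma~\ref{lem:PfOfProdTrans}, taking $\pi$ to be the transposition $(p\,q)$. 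The tuple is unchanged by this transposition, so
\[
\pf(\SGN) = \det(\mathbf P^{\pi})\pf(\SGN) = -\pf(\SGN).
\]
This only gives $2\pf(\SGN)=0$, which is not enough in characteristic $2$. So the real argument has to be polynomial: $\pf$ of $\SGN$ is an integer computed by a universal formula, and its entries lie in $\{0,\pm1\}$. I will therefore compute in $\mathbb Z$ first, where $2x=0$ forces $x=0$, and then map the result into $\F$. Alternatively, one can pair terms directly in the Pfaffian expansion of Definition~\ref{def:pfaffian} by swapping $p$ and $q$ in each matching; this is a sign-reversing involution, and a matching that pairs $p$ with $q$ contributes the factor $s_{p\,q}=\sgn(c_p,c_q)=0$. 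I will use the reduction to $\mathbb Z$ as the cleaner route.

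\emph{Distinct entries.} Let $\pi\in S_{2m}$ be the permutation that sorts the tuple, so that $c_{\pi(1)}<\ldots<c_{\pi(2m)}$. By Corollary~\ref{cor:PermMatOnSGNMat}, $\mathbf P^{\pi}\SGN(c_1,\ldots,c_{2m})(\mathbf P^{\pi})^t=\SGN(c_{\pi(1)},\ldots,c_{\pi(2m)})$. Because the sorted tuple is increasing, this matrix is exactly the matrix $G$ of Lemma~\ref{lem:PfD}, whose Pfaffian is $1$. Lemma~\ref{lem:PfOfProdTrans} then gives
\[
\sgn(\pi)\,\pf(\SGN(c_1,\ldots,c_{2m})) = 1 .
\]
By Lemma~\ref{lem:PermChangesSign}, $1=\sgn(c_{\pi(1)},\ldots,c_{\pi(2m)})=\sgn(\pi)\sgn(c_1,\ldots,c_{2m})$. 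Since $\sgn(\pi)=\pm1$, dividing out $\sgn(\pi)$ yields $\pf(\SGN)=\sgn(c_1,\ldots,c_{2m})$.

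The main obstacle is the repeated-entry case in arbitrary characteristic. I expect to resolve it by carrying out the computation in $\mathbb Z$ and then applying the canonical map $\mathbb Z\to\F$, which is compatible with the Pfaffian because the Pfaffian is an integer polynomial in the entries.
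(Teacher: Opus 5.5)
Your proof is correct and follows the paper's argument. The distinct-entries case matches the paper step for step: the sorting permutation, Corollary~\ref{cor:PermMatOnSGNMat}, Lemmas~\ref{lem:PfOfProdTrans}, \ref{lem:PfD} and~\ref{lem:PermChangesSign}, then cancelling the unit $\sgn(\pi)$. For repeated entries the paper only says the identity is clear, whereas you actually prove $\pf(\SGN(c_1,\ldots,c_{2m}))=0$ in every characteristic. You do this either by applying the transposition $(p\,q)$ over $\mathbb Z$ and then the canonical map $\mathbb Z\to\F$, or by the sign-reversing involution on matchings. Both arguments are valid and fill in a step the paper leaves implicit.
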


\begin{proof}If there exist $1 \le i < j \le 2m$ such that $c_{i} = c_j$, then the equality~\eqref{lem:PfOfSGNIsSgn:eqq1} clearly holds. 

Assume now that all $c_i$ are pairwise distinct. Then there is $\pi \in S_{2m}$ such that 
\begin{equation}\label{lem:PfOfSGNIsSgn:eq3}
c_{\pi(1)} < \ldots < c_{\pi(2m)}.
\end{equation}
The equality~\eqref{lem:PfOfSGNIsSgn:eq3} implies in particular that
\begin{equation}\label{lem:PfOfSGNIsSgn:eq1}
\sgn (c_{\pi(i)}, c_{\pi(j)}) =  \sgn (i, j)\;\;\mbox{for all}\;\; 1 \le i, j \le 2m.
\end{equation}
Hence, 
\begin{equation}\label{lem:PfOfSGNIsSgn:eq2}
\mathbf P^{\pi} \SGN(c_1, \ldots, c_{2m})(\mathbf P^{\pi})^t \overset{\mbox{\scriptsize Corollary}~\ref{cor:PermMatOnSGNMat}}{=\joinrel=\joinrel=\joinrel=\joinrel=\joinrel=\joinrel=\joinrel=\joinrel=} \SGN(c_{\pi(1)}, \ldots, c_{\pi(2m)}) \overset{\eqref{lem:PfOfSGNIsSgn:eq1}}{=\joinrel=\joinrel=} G,
\end{equation}
where $G = (g_{i\,j}) \in \M_{2m\, 2m}(\F)$ is defined by $g_{i\,j} = \sgn (i, j).$ 

On the one hand,
\begin{multline}\label{lem:PfOfSGNIsSgn:eq4}
\pf\left(\mathbf P^{\pi}\SGN(c_1, \ldots, c_{2m})(\mathbf P^{\pi})^t\right) \overset{\mbox{\scriptsize Lemma~\ref{lem:PfOfProdTrans}}}{=\joinrel=\joinrel=\joinrel=\joinrel=\joinrel=} \det(\mathbf P^{\pi}) \pf \left(\SGN(c_1, \ldots, c_{2m})\right)\\
 \overset{\mbox{\scriptsize see Definition~\ref{def:PermMat}}}{=\joinrel=\joinrel=\joinrel=\joinrel=\joinrel=\joinrel=\joinrel=\joinrel=\joinrel=\joinrel=\joinrel=\joinrel=} \sgn(\pi(1), \ldots, \pi(2m)) \pf\left(\SGN(c_1, \ldots, c_{2m})\right).
\end{multline}
On the other hand,
\begin{equation}\label{lem:PfOfSGNIsSgn:eq5}
\pf\left(\mathbf P^{\pi}\SGN(c_1, \ldots, c_{2m})(\mathbf P^{\pi})^t\right) \overset{\eqref{lem:PfOfSGNIsSgn:eq2}}{=\joinrel=\joinrel=} \pf (G) \overset{\mbox{\scriptsize Lemma~\ref{lem:PfD}}}{=\joinrel=\joinrel=\joinrel=\joinrel=\joinrel=} 1. 
\end{equation}
By aligning~\eqref{lem:PfOfSGNIsSgn:eq4} and~\eqref{lem:PfOfSGNIsSgn:eq5} together we obtain that
\begin{equation}\label{lem:PfOfSGNIsSgn:eq6}
\sgn(\pi(1), \ldots, \pi(2m)) \cdot \pf\left(\SGN(c_1, \ldots, c_{2m})\right) = 1.
\end{equation}
In addition,
\begin{equation}\label{lem:PfOfSGNIsSgn:eq7}
\sgn(\pi(1), \ldots, \pi(2m)) \cdot \sgn (c_{1}, \ldots, c_{2m}) \overset{\mbox{\scriptsize Lemma~\ref{lem:PermChangesSign}}}{=\joinrel=\joinrel=\joinrel=\joinrel=\joinrel=} \sgn (c_{\pi(1)}, \ldots,  c_{\pi(2m)}) \overset{\eqref{lem:PfOfSGNIsSgn:eq3}}{=\joinrel=\joinrel=\joinrel=\joinrel=} 1
\end{equation}
By aligning~\eqref{lem:PfOfSGNIsSgn:eq6} and~\eqref{lem:PfOfSGNIsSgn:eq7} together we conclude that 
\begin{equation}\label{lem:PfOfSGNIsSgn:eq8}
\sgn(\pi(1), \ldots, \pi(2m)) \cdot \pf\left(\SGN(c_1, \ldots, c_{2m})\right) = \sgn(\pi(1), \ldots, \pi(2m)) \cdot \sgn (c_{1}, \ldots, c_{2m}).
\end{equation}
Since $\sgn(\pi(1), \ldots, \pi(2m)) = \pm 1$, the equality~\eqref{lem:PfOfSGNIsSgn:eq8} implies~\eqref{lem:PfOfSGNIsSgn:eqq1}. 
\end{proof}

\begin{lemma}\label{lem:SgnPermToSgnOfPairs}Let $(c_1, \ldots, c_{2m}) \in \N^{2m}$ be a tuple of natural numbers. Then
\begin{equation}\label{lem:SgnPermToSgnOfPairs:eqq1}
\sgn (c_1, \ldots, c_{2m}) = \sum_{\pi \in \Pi_{2m}} \sgn (\pi(1), \ldots, \pi(2m)) \Pi_{l = 1}^{m} \sgn (c_{\pi(2l-1)}, c_{\pi(2l)}).
\end{equation}
\end{lemma}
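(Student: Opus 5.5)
The right-hand side of~\eqref{lem:SgnPermToSgnOfPairs:eqq1} is, by Definition~\ref{def:pfaffian}, exactly $\pf\left(\SGN(c_1, \ldots, c_{2m})\right)$. Indeed, the $(\pi(2l-1), \pi(2l))$ entry of $\SGN(c_1,\ldots,c_{2m})$ is $\sgn(c_{\pi(2l-1)}, c_{\pi(2l)})$ by Definition~\ref{def:SGNMat}. So the plan is to identify the right-hand side as this Pfaffian and then invoke Lemma~\ref{lem:PfOfSGNIsSgn}.

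Before applying Definition~\ref{def:pfaffian}, I will check that $\SGN(c_1, \ldots, c_{2m})$ is a skew-symmetric matrix, since the Pfaffian is only defined for such matrices. By Definition~\ref{def:SignOfSec}, $\sgn(c_i, c_i) = 0$, so the diagonal vanishes. For $i \ne j$ we have $\sgn(c_j, c_i) = -\sgn(c_i, c_j)$: when $c_i = c_j$ both sides are $0$, and otherwise exactly one of the two pairs forms an inversion. Hence the matrix is skew-symmetric with zero diagonal, and Definition~\ref{def:pfaffian} applies verbatim.

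Combining the two steps gives
\[
\sum_{\pi \in \Pi_{2m}} \sgn (\pi) \Pi_{l=1}^{m} \sgn(c_{\pi(2l-1)}, c_{\pi(2l)}) = \pf\left(\SGN(c_1,\ldots,c_{2m})\right) = \sgn(c_1,\ldots,c_{2m}),
\]
where the last equality is Lemma~\ref{lem:PfOfSGNIsSgn}. There is no real obstacle here; the only point requiring care is the skew-symmetry check, including the degenerate case of repeated $c_i$, which has just been handled.
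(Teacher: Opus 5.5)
Your proposal is correct and is essentially the paper's own proof: the paper also recognizes the right-hand side as $\pf\left(\SGN(c_1,\ldots,c_{2m})\right)$ via Definition~\ref{def:pfaffian} and then concludes with Lemma~\ref{lem:PfOfSGNIsSgn}. Your explicit check that $\SGN(c_1,\ldots,c_{2m})$ is skew-symmetric is a small addition that the paper leaves implicit.
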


\begin{proof}
Using the definition of the Pfaffian, we obtain that
\begin{equation}\label{lem:SgnPermToSgnOfPairs:eq1}
\sum_{\pi \in \Pi_{2m}} \sgn (\pi) \Pi_{l = 1}^{m} \sgn (c_{\pi(2l-1)}, c_{\pi(2l)}) = \pf \left(\SGN (c_1, \ldots, c_{2m})\right).
\end{equation}
Then from Lemma~\ref{lem:PfOfSGNIsSgn} we obtain that 
\begin{equation}\label{lem:SgnPermToSgnOfPairs:eq2}
\pf \left(\SGN (c_1, \ldots, c_{2m})\right) = \sgn (c_1, \ldots, c_{2m}).
\end{equation} 
By aligning the equalities~\eqref{lem:SgnPermToSgnOfPairs:eq1} and~\eqref{lem:SgnPermToSgnOfPairs:eq2} together we obtain~\eqref{lem:SgnPermToSgnOfPairs:eqq1}.
\end{proof}

\section{Alternative proof of Theorem~\ref{thm:DetEqPfAll}}
\label{sec:DetEqPfAllAltProof}

In this section we provide an alternative proof of Theorem~\ref{thm:DetEqPfAll}. As in the proof in Section~\ref{sec:main} we consider separately the cases when $k$ is even and $k$ is odd. 

\begin{proof}[Proof of Theorem~\ref{thm:DetEqPfAll}]\textbf{$k$ is even.}\;\; Assume that $k = 2m$ and $X = (x_{i\,j})\in \M_{n\, 2m}(\F).$ Let us first notice that  monomials on the both sides of~\eqref{thm:DetEqPfAll:eqq1} have the form $x_{c_1\, 1} \cdot \ldots \cdot x_{c_{2m}\, 2m}$ for some tuple $(c_1, \ldots, c_{2m}) \in \mathbb N^{2m}$. Let us show that the coefficients of these monomials are equal to each other.

Lemma~\ref{lem:CullisDetSgnDef} implies that the coefficient of $x_{c_1\, 1} \cdot \ldots \cdot x_{c_{2m}\, 2m}$ on the left-hand side of~\eqref{thm:DetEqPfAll:eqq1} is equal to
\begin{equation}\label{thm:DetEqPf2K:eq3}
(-1)^{\sum_{i=1}^{2m} (c_i - i)} \sgn (c_1, \ldots, c_{2m}).
\end{equation}

To find the coefficient of $x_{c_1\, 1} \cdot \ldots \cdot x_{c_{2m}\, 2m}$ on the right-hand side of~\eqref{thm:DetEqPf2K:eqq1}, let us expand the right-hand side of~\eqref{thm:DetEqPfAll:eqq1} as follows
\begin{multline}\label{thm:DetEqPf2K:eq1}
 (-1)^{1 + 2 + \ldots + 2m}\pf \left(X^t \D^{(n)} X\right)
 = (-1)^{1 + 2 + \ldots + 2m}\\
 \times \sum_{\pi \in \Pi_{2m}} \sgn (\pi(1), \ldots, \pi(2m))  \Pi_{l = 1}^{m} \left(X^t \D^{(n)} X\right)_{\pi(2l -1)\, \pi(2l)}\\
 = (-1)^{1 + 2 + \ldots + 2m}\sum_{\pi \in \Pi_{2m}} \sgn (\pi(1), \ldots, \pi(2m)) \Pi_{l = 1}^{m} \left(\sum_{1 \le i, j \le n} x_{i\,\pi(2l-1)} d^{(n)}_{i\,j} x_{j\,\pi(2l)} \right)\\
\overset{{\scriptsize\mbox{Definition}~\ref{def:Dmat}}}{=\joinrel=\joinrel=\joinrel=\joinrel=\joinrel=\joinrel=\joinrel=} (-1)^{1 + 2 + \ldots + 2m}\phantom{XXXXXXXXXXXXXXXXXXXXXX}\\
\times \sum_{\pi \in \Pi_{2m}} \sgn (\pi(1), \ldots, \pi(2m)) \Pi_{l = 1}^{k} \left(\sum_{1 \le i, j \le n} (-1)^{i + j}\sgn (i, j) x_{i\, \pi(2l-1)} x_{j\,\pi(2l)} \right).
\end{multline}
The monomials $x_{c_1\, 1} \cdot \ldots \cdot x_{c_{2m}\, 2m}$ and $x_{i_1\, \pi(1)}x_{j_1\, \pi(2)} \cdot \ldots \cdot x_{i_1\, \pi(2m-1)}x_{j_1\, \pi(2m)}$ are equal if and only if $(i_l, j_l) = (c_{\pi(2l-1)}, c_{\pi(2l)})$ for all $1 \le l \le k$. Therefore, \eqref{thm:DetEqPf2K:eq1} implies that the coefficient of $x_{c_1\, 1} \cdot \ldots \cdot x_{c_{2m}\, 2m}$ on the right-hand side of~\eqref{thm:DetEqPfAll:eqq1} is equal to
\begin{multline}\label{thm:DetEqPf2K:eq2}
(-1)^{1 + 2 + \ldots + 2m} \left(\sum_{\pi \in \Pi_{2m}} \sgn (\pi) (-1)^{c_1 + \ldots + c_{2m}} \Pi_{l = 1}^{m} \sgn (c_{\pi(2l-1)}, c_{\pi(2l)})\right)\\
=(-1)^{\sum_{i=1}^{2m} (c_i - i)}\left(\sum_{\pi \in \Pi_{2m}} \sgn (\pi)\Pi_{l = 1}^{m} \sgn (c_{\pi(2l-1)}, c_{\pi(2l)})\right).
\end{multline}
From Lemma~\ref{lem:SgnPermToSgnOfPairs} we obtain that
\[
 \sgn (c_1, \ldots, c_{2m}) = \sum_{\pi \in \Pi_{2m}} \sgn (\pi)\Pi_{l = 1}^{m} \sgn (c_{\pi(2l-1)}, c_{\pi(2l)}).
\]
Hence, \eqref{thm:DetEqPf2K:eq3} and~\eqref{thm:DetEqPf2K:eq2} are equal. Since these coefficients are equal for all tuples\linebreak $(c_1, \ldots, c_{2m}) \in \mathbb N^{2m}$, the equality~\eqref{thm:DetEqPfAll:eqq1} is established in the considered case.

\paragraph{$k$ is odd} Recall that the statement of Theorem~\ref{thm:DetEqPfAll} for this case involves the matrix  $X' \in \M_{(n+1)\, (k+1)}(\F)$  defined by  
 $$X' =  \left(
\begin{array}{c|ccc}
\multicolumn{1}{c}{1} & 0 & \cdots & 0 \\
\cline{2-4}
0      & &   & \\
\vdots & & X & \\
0      & &   & 
\end{array}
\right).$$

Then, since $k + 1$ is even, using the definition of $X'$, we obtain that
\begin{multline*}
\det_{n\,k}(X) \overset{\parbox{3.5cm}{\scriptsize Laplace expansion of $X'$ along its first column}}{=\joinrel=\joinrel=\joinrel=\joinrel=\joinrel=\joinrel=\joinrel=\joinrel=\joinrel=\joinrel=\joinrel=\joinrel=\joinrel=\joinrel=} \det_{(n+1)\,(k+1)}(X')\\
\overset{\mbox{\scriptsize Theorem~\ref{thm:DetEqPfAll} for $X'$}}{=\joinrel=\joinrel=\joinrel=\joinrel=\joinrel=\joinrel=\joinrel=\joinrel=\joinrel=} (-1)^{1 + 2 + \ldots + (k+1)}\pf \left((X')^t \D^{(n+1)} X'\right)
\end{multline*}
\end{proof}

\end{document}